\newtheorem{theorem}{Theorem}
\newtheorem{remark}{Remark}
\newtheorem{definition}{Definition}
\newtheorem{lemma}{Lemma}
\newcommand{\0}{\mathaccent23}
\newcommand\grad{\operatorname{grad}}
\renewcommand\div{\operatorname{div}}
\newcommand\curl{\operatorname{curl}}
\newcommand\rot{\operatorname{rot}}
\newcommand\V{{\mathcal{V}}}
\newcommand\E{{\mathcal{E}}}
\newcommand\T{{\mathcal{T}}}
\newcommand{\newcase}[1]{
\bigskip
\noindent -- \emph{#1}
}
\begin{document}

\title{Nodal Finite Element de Rham Complexes}

\author{Snorre H. Christiansen\thanks{Department of Mathematics, University of Oslo, PO Box 1053 Blindern, NO 0316 Oslo, Norway. 
              email:{\tt snorrec@math.uio.no}} \and Kaibo Hu\thanks{Department of Mathematics,
University of Oslo,Oslo 0316, Norway. email: {\tt kaibohu@math.uio.no.}}\and Jun Hu\thanks{LMAM and School of Mathematical Sciences, Peking University, Beijing 100871, P. R. China.
              email:{\tt hujun@math.pku.edu.cn}}}
\date{}

\maketitle

\begin{abstract}
We construct 2D and 3D finite element de Rham sequences of arbitrary polynomial degrees with extra smoothness. 
Some of these elements have nodal degrees of freedom (DoFs) and can be considered as generalisations of scalar Hermite and Lagrange elements. Using the nodal values, the number of global degrees of freedom is reduced compared with the classical N\'{e}d\'{e}lec and Brezzi-Douglas-Marini (BDM) finite elements, and the basis functions are more canonical and easier to construct. Our finite elements for ${H}(\mathrm{div})$ with regularity $r=2$ coincide with the nonstandard elements given by  Stenberg (Numer Math 115(1): 131-139, 2010).
We show how regularity decreases in the finite element complexes, so that they branch into known complexes. The standard de Rham complexes of Whitney forms and their higher order version can be regarded as the family with the lowest regularity. The construction of the new families is motivated by the finite element systems.

\end{abstract}

\section{Introduction}

Differential complexes are an important tool in the study of finite element methods. Finite element differential complexes characterise finite element spaces and the operators among them, specifying their kernels and images, which are crucial for the stability of numerical formulations \cite{Arnold2002} and fast solvers \cite{Hiptmair2007}. There are many existing finite element differential complexes, for example the de Rham complex \cite{Hiptmair.R.2002a,Arnold2006}, the Stokes complex \cite{guzman2013conforming,Neilan2015}, the Darcy-Stokes complex \cite{mardal2002robust,tai2006discrete}, the elasticity complex \cite{Arnold2006a,Arnold2008} etc. 
Among them, the discrete de Rham sequence is probably the most fundamental one.  Stokes complexes  and Darcy-Stokes complexes have the same differential operators as the standard de Rham complexes, and the only difference is that the spaces of Stokes and Darcy-Stokes have higher continuity.

There have been many discussions on finite element de Rham sequences. For incomplete polynomials, there are the N\'{e}d\'{e}lec  elements of the first kind \cite{Nedelec.J.1980a} and the Raviart-Thomas  elements \cite{Raviart.P;Thomas.J.1977a}. For complete polynomials, there are the N\'{e}d\'{e}lec elements of the second kind \cite{Nedelec.J.1986a} and the Brezzi-Douglas-Marini (BDM) elements \cite{brezzi1985two}.
  All these successful elements can be unified as discrete differential forms \cite{hiptmair2001higher,Arnold.D;Falk.R;Winther.R.2006a}.  The construction of degrees of freedom (DoFs) for higher order Whitney forms is based on moments on subsimplexes, therefore commuting interpolations can be constructed easily. A periodic table has been developed to include arbitrary polynomial degree for any $k$-forms, for simplicial and tensor product elements in any dimension \cite{arnold2014periodic}.    

Such elements are usually called ``vector elements'', and cannot be represented by Lagrange nodal basis functions. This leads to complications  in applications, especially for high order methods. 

A canonical nodal basis is attractive from the perspective of implementation.  The choice of basis is not unique and there are several important criterions for a good basis, including condition number, sparsity of stiffness and mass matrices, efficient evaluation and rotational symmetry etc. There is a huge literature on the construction of bases for high order finite elements. We refer to the book \cite{karniadakis2013spectral} for a survey of the scalar case. Bases for  $H(\curl)$ and $H(\div)$ are more complicated. Some efforts in this direction can be found in \cite{zaglmayr2006high}.
The difficulty is that the bubble functions of $H(\curl)$ and $H(\div)$ elements are not as  canonical as those of the scalar $H^{1}$ elements  \cite{christiansen2016high}.

This has already been reflected in the dilemma of computational electromagnetism. On one hand, nodal Lagrange elements are desirable for their simplicity, economic degrees of freedom and point-wise evaluation; on the other hand, the $C^{0}$ vector Lagrange elements suffer from spurious modes and have difficulties in dealing with inhomogeneous materials where the normal component of the electric field may be discontinuous at interfaces \cite{boyse1992nodal}. There has been an increased interest in the use of nodal elements in computational electromagnetism.
 A common strategy is to add a stabilisation term $-\grad\div$ in the variational formulation besides the $\curl\curl$ operator (c.f.  \cite{boyse1992nodal}). In order to remove spurious modes on nonconvex domains, a weighted version of the penalty term was proposed in Costabel and Dauge \cite{costabel2002weighted}, and a local projection in the penalty term was proposed by Duan et al. \cite{duan2009local}. Eigenvalue problems approximated with  Lagrange elements are considered in 
 \cite{bonito2011approximation}. Of course this is a rather incomplete review of the literature.

From the perspective of finite element exterior calculus, $H(\curl)$ edge elements stand out for allowing normal discontinuities and fitting in a discrete sequence of spaces. Therefore we are motivated to seek nonstandard finite element differential complexes keeping these properties, but with nodal type bases. Actually, our approach is to look for elements with higher continuity on low dimensional subsimplexes (for example, extra smoothness at vertices and edges etc.). We remark that $H(\div)$ elements with nodal degrees of freedom and incomplete polynomial shape function spaces were explored in \cite{guzman2013conforming,guzman2014conforming}.

There are also some important circumstances where we need to match several copies of finite element de Rham sequences with different continuities.  
In Arnold, Falk and Winther \cite{Arnold2006a}, the authors constructed the Arnold-Winther symmetric stress element in 2D using the following Bernstein-Gelfand-Gelfand (BGG) resolution:    
 \begin{diagram}
H_{h}^{1} & \rTo^{\curl} & H_{h}(\div) & \rTo^{\div} & L_{h}^{2}& \rTo & 0\\
  & \ruTo^{S_{0}}   &  &  \ruTo^{S_{1}} &&\\
 \left (\tilde{H}_{h}^{1}\right )^{2} & \rTo^{\curl} & \left ( \tilde{H}_{h}(\div)\right )^{2}  & \rTo^{\div} &  \left (\tilde{L}_{h}^{2}\right )^{2} &\rTo & 0
 \end{diagram} 
  Here $S_{0}$ is bijective between the discrete spaces $ \left (\tilde{H}_{h}^{1}\right )^{2}$ and $H_{h}(\div)$. Usually it is not easy to find a compatible element for both $\left (H^{1}\right )^{2}$ and $H(\div) $.  Here compatibility means that this element should fit in both sequences, but for different operators. Therefore in the numerical discretisation, a compromise is to find finite element spaces such that $S_{0}^{h}:=\Pi S_{0}$ is onto, where $\Pi$ is a projection to $H_{h}(\div)$. This leads to convergent finite elements with weak symmetry \cite{arnold2007mixed}.   In order to obtain an element with strong symmetry, one has to find the isomorphic elements for $\left (\tilde{H}_{h}^{1}\right )^{2}$ and $H_{h}(\div) $. In \cite{Arnold2006a} the authors constructed such vector elements to give an explanation of Arnold-Winther elasticity elements.  

For conforming finite elements with local DoFs,  $S_{0}$ imposes stronger continuity requirement on the $H(\div)$ element, which actually leads to a Stokes complex. Therefore from the  perspective of BGG, an essential difficulty of the construction of tensor-valued elements with strong symmetry  is to construct and match discrete de Rham complexes with different continuities. 
   
On the other hand, Hu et al. \cite{hu2015family,Hu2015,hu2016finite} designed a new element for linear elasticity with strong symmetry following a different approach.  A two-step approach was proposed to design compatible elements and prove the inf-sup condition. The displacement space is divided into rigid body motions and its orthogonal complement. Rigid body motion is controlled by the face functions which are $C^{0}$ continuous, and its orthogonal complement is controlled by the bubble functions on each element, which are local. The resulting space has a canonical nodal basis. However it remains open to  understand this innovative approach and generalize the constructions to the de Rham case in the framework of finite element exterior calculus, which may yield a more systematic construction for a broader class of applications.  We also hope a systematic study could give a new perspective for the challenging problem of designing bases for high order elements.

As a summary, we have several motivations to develop the new finite element de Rham complexes in this paper:
\begin{itemize}
\item
obtaining smaller algebraic systems by elements with higher continuity, which was also the motivation of Stenberg  \cite{Stenberg2010},
\item
getting Lagrange or Hermite type nodal basis functions, which are more canonical and easier to write,
\item
making progress towards a systematic development of finite element complexes compatible with the BGG construction,  
\item
developing tools of  finite element exterior calculus to re-construct Hu-Zhang elasticity elements,
\item
a better understanding of the periodic table of finite element differential forms.
\end{itemize}

 We will introduce $r$ as a new regularity parameter in the finite element periodic table, which gives $\mathcal{P}_{r, p}\Lambda^{k}(\mathcal{T}_{h}^{n})$ on $ n$-dimensional simplicial mesh, for differential $k$ forms with piecewise polynomials of degree $p$.   Because of the periodicity, sometimes different values of $r$ in $\mathcal{P}_{r, p}\Lambda^{k}(\mathcal{T}_{h}^{n})$ may represent the same element. The major results are summarised in Table \ref{tab:2Dnotation} and Table \ref{tab:3Dnotation}.
 \begin {table}[H]
 \caption{2D families and notation  ($p\geq 2$), elements in each box are the same as the family with lowest regularity (possibly with different notation), $r=0$ and $r=1$ have the same 2D bubbles}
 \label{tab:2Dnotation}
\begin{center}
\begin{tabular}{ c |c c  c}
&$k=0$ & $k=1$ & $k=2$ \\\hline
\cline{4-4} \cline{3-3}
$r=0$&  Lagrange $\mathcal{P}_{p}\Lambda^{0}(\mathcal{T}_{h}^{2})$  &  \multicolumn{1}{|c|}{BDM  $\mathcal{P}_{p-1}\Lambda^{1}(\mathcal{T}_{h}^{2})$}  & \multicolumn{1}{|c|}{DG $\mathcal{P}_{p-2}\Lambda^{2}(\mathcal{T}_{h}^{2})$} \\
  \cline{3-3}
$r=1$&  Hermite $\mathcal{P}_{1, p+1}\Lambda^{0}(\mathcal{T}_{h}^{2})$ & Stenberg $\mathcal{P}_{1, p}\Lambda^{1}(\mathcal{T}_{h}^{2})$ &  \multicolumn{1}{|c|}{DG $\mathcal{P}_{1, p-1}\Lambda^{2}(\mathcal{T}_{h}^{2})$}  \\
  \cline{4-4}
$r=2$&  Argyris $\mathcal{P}_{2, p+3}\Lambda^{0}(\mathcal{T}_{h}^{2})$  & vector Hermite $\mathcal{P}_{2, p+2}\Lambda^{1}(\mathcal{T}_{h}^{2})$ & Falk-Neilan $\mathcal{P}_{2, p+1}\Lambda^{2}(\mathcal{T}_{h}^{2})$ \\
\end{tabular}
\end{center}
\end{table}
\begin {table}[H]
 \caption{3D families and notation  ($p\geq 3$), elements in each box are the same as the family with lowest regularity (possibly with different notation), $r=0$ and $r=1$ have the same 2D bubbles, and $r=0, 1, 2$ have the same 3D bubbles}
 \label{tab:3Dnotation}
\begin{center}
\footnotesize
\begin{tabular}{ c | c c c c }
&$k=0$ & $k=1$ & $k=2$ & $k=3$ \\\hline
\cline{4-4}\cline{3-3} \cline{5-5}
 $r=0$& Lagrange $\mathcal{P}_{p}\Lambda^{0}(\mathcal{T}_{h}^{3}) $  &  \multicolumn{1}{|c|}{N\'{e}d\'{e}lec $\mathcal{P}_{p-1}\Lambda^{1}(\mathcal{T}_{h}^{3})$}  & \multicolumn{1}{|c|}{BDM $\mathcal{P}_{p-2}\Lambda^{2}(\mathcal{T}_{h}^{3})$} & \multicolumn{1}{|c|}{DG $\mathcal{P}_{p-3}\Lambda^{3}(\mathcal{T}_{h}^{3})$} \\
  \cline{3-3}
$r=1$&  Hermite $\mathcal{P}_{1, p}\Lambda^{0}(\mathcal{T}_{h}^{3})$ & new, $\mathcal{P}_{1, p-1}\Lambda^{1}(\mathcal{T}_{h}^{3})$ &  \multicolumn{1}{|c|}{BDM $\mathcal{P}_{1, p-2}\Lambda^{2}(\mathcal{T}_{h}^{3})$}  & \multicolumn{1}{|c|}{DG $\mathcal{P}_{1, p-3}\Lambda^{3}(\mathcal{T}_{h}^{3})$} \\
  \cline{4-4}
$r=2$&  (scalar) 3D Neilan velocity $\mathcal{P}_{2, p+2}\Lambda^{0}(\mathcal{T}_{h}^{3})$ & new, $\mathcal{P}_{2, p+1}\Lambda^{1}(\mathcal{T}_{h}^{3})$ & Stenberg $\mathcal{P}_{2, p}\Lambda^{2}(\mathcal{T}_{h}^{3})$  & \multicolumn{1}{|c|}{DG $\mathcal{P}_{2, p-1}\Lambda^{3}(\mathcal{T}_{h}^{3})$} \\
  \cline{5-5}
\end{tabular}
\end{center}
 \end{table}

 For each $r=0, 1, 2$, the elements in lower dimensional spaces are restrictions of those in higher dimensions.  The $H(\div)$ elements with $n=2, r=1$ and $n=3, r=2$ coincide with the ``nonstandard $H(\div)$ elements'' of  Stenberg \cite{Stenberg2010}. 
 
The new elements have nodal degrees of freedom. The 2D $H(\curl)$ element with $r=2$ (the velocity space of the Falk-Neilan Stokes pair \cite{falk2013stokes})  consists of two copies of the scalar Hermite element. Therefore the basis of $\mathcal{P}_{2, p}\Lambda^{1}\left (\mathcal{T}_{h}^{2}\right )$ is a simple combination of the scalar Hermite bases.    The 2D $H(\div)$ element with $r=1$ (Stenberg nonstandard element \cite{Stenberg2010}) and the 3D $H(\curl)$ element with $r=2$ are essentially vectorial, i.e. they cannot be represented as copies of scalar elements. Nevertheless, the basis of these two spaces can be written based on scalar Lagrange and Hermite elements respectively by allowing tangential/normal degrees of freedom taking different values on neighboring elements. This trick cannot be applied to the 3D Stenberg $H(\div)$ element ($\mathcal{P}_{2, p}\Lambda^{2}(\mathcal{T}_{h}^{3})$) since there are no degrees of freedom on edges (except for those at vertices). We can further reduce $\mathcal{P}_{2, p}\Lambda^{2}(\mathcal{T}_{h}^{3})$ to a subspace by imposing normal continuity on edges and meanwhile retain the inf-sup condition between this space and piecewise polynomials. This space, which we will call ``Hu-Zhang type $H(\div)$ element'', is a generalization of the Hu-Zhang construction \cite{Hu2015,hu2015family} of a symmetric stress element for the Hellinger-Reissner principle of linear elasticity  and admits a Lagrange type nodal basis.
 

The elements in the new sequences are subspaces of the standard finite element de Rham complexes with complete polynomials \cite{arnold2014periodic}, which fit in our family with $r=0$.  Moreover, we can see the periodicity: 
beginning with an element with higher continuity (for example, the Hermite element for $H^{1}$ with $r=1$), the continuity decreases as we take exterior derivatives (for $r=1$, we go back to the classical BDM element for $H(\div)$ in 3D). Furthermore, we can consider restrictions to lower dimensional simplexes. This is analogous to the idea of finite element system (FES) \cite{christiansen2010finite}, and inspired us to discover the whole families.  For  $r=2$,  restriction of the finite elements to a two dimensional face has higher continuity across the one dimensional boundary of that face. In fact, this reconstructs the 2D Stokes complex of Falk and Neilan \cite{falk2013stokes}.

Although the new elements have higher continuity on low dimensional subsimplexes (vertices and edges etc.), generally these elements are not conforming approximations for higher order problems. For example, the scalar Hermite element is $C^{1}$ at vertices, but normal derivatives may be discontinuous across faces. As a result, the triangular or tetrahedral Hermite element is not a reasonable approximation for the fourth order biharmonic problem.  Since the purpose of this paper is not to pursue finite elements for high order PDEs, this will not be a trouble.  

The enriched periodic table studied in this paper also gives another possibility for  BGG constructions at least in 2D. We can use the Falk-Neilan Stokes complex ($n=2, r=2$) for the top row, and use the $n=2, r=1$ complex for the bottom. Then $S_{0}$ is an identification between vector Hermite elements.  
This leads to the 2D Hu-Zhang elasticity element \cite{hu2014family}, and explains why the stress element naturally begins with cubic polynomials  (since Hermite elements are at least cubic).

Next we recall some notation. For a contractible domain $\Omega\subset \mathbb{R}^{2}$, we have two exact sequences in 2D:
  \begin{equation}\label{sequence1}
\begin{CD}
0@>>>\mathbb{R}@>>>H(\curl; \Omega) @>\curl>> {H}(\mathrm{div}; \Omega) @>\mathrm{div} >> L^{2}(\Omega)  @ > >>  0,
\end{CD}
\end{equation}
and
 \begin{equation}\label{sequence2}
\begin{CD}
0@>>>\mathbb{R}@>>>H({\grad}; \Omega) @>{\grad}>> {H}(\mathrm{rot}; \Omega) @>\mathrm{rot} >> L^{2}(\Omega)  @ > >>  0,
\end{CD}
\end{equation}
and  we have one exact sequence in 3D on a contractible domain $\Omega\subset \mathbb{R}^{3}$:
 \begin{equation}\label{sequence3}
\begin{CD}
0@>>>\mathbb{R}@>>>H({\grad}; \Omega) @>{\grad}>> {H}(\curl; \Omega)@>\curl>> {H}(\mathrm{div}; \Omega)  @>\mathrm{div} >> L^{2}(\Omega)  @ > >>  0.
\end{CD}
\end{equation}
 One can obtain \eqref{sequence2} by rotating \eqref{sequence1} by $\pi/2$. Therefore in the remaining part of this paper, we only consider \eqref{sequence1} in 2D.

We assume that $\Omega$ is a polyhedral domain. 
In the following, we will use $\mathcal{V}$ to denote the set of vertices, $\mathcal{E}$ for the edges, $\mathcal{F}$ for the faces and $\mathcal{T}$ for the 3D cells. For a given mesh, $V$, $E$, $F$ and $T$ are used to denote the number of vertices, edges, faces and tetrahedra respectively. From Euler's formula, one has $V-E+F=1$ in 2D and $V-E+F-T=1$ in 3D for contractible domains.

We use $\bm{\nu}_{f}$ and $\bm{\tau}_{f}$ to denote the unit normal and tangential vectors of a simplex $f$ respectively. In 2D, the tangential and normal directions of an edge are uniquely defined up to an orientation. For edges in 3D there are one tangential and two normal directions, and for faces in 3D there are one normal and two tangential directions. We will write $\bm{\tau}_{e}$,  $\bm{\nu}_{e, i}$ and $\bm{\nu}_{f}$,  $\bm{\tau}_{f, i}$, $i=1, 2$ for these cases.

In our discussions, $C^{r}(\mathcal{V})$ includes functions with continuous derivatives up to order $r$ at the vertices. Similarly we can define $C^{r}(\mathcal{E})$ and $C^{r}(\mathcal{F})$ for functions with certain continuity on the edges and faces.

We use the notation $\mathcal{P}_{p}\Lambda^{k}(\Omega)$ to denote the Lagrange, second N\'{e}d\'{e}lec, BDM and discontinuous elements with polynomial degree $p$, and use $\mathcal{P}_{p}^{-}\Lambda^{k}(\Omega)$ for the family with incomplete polynomials, i.e. $\mathcal{P}_{p}^{-}\Lambda^{1}(\Omega)$ is the N\'{e}d\'{e}lec element of the first kind of degree $p$, $\mathcal{P}_{p}^{-}\Lambda^{2}(\Omega)$ is the Raviart-Thomas element of degree $p$ in 3D. We  use $\mathcal{P}_p (\Omega)$  to denote the polynomial space of degree $p$ on $\Omega$ and use $\mathcal{P}_{r, p}\Lambda^{k}(\Omega)$ to denote the families  developed below. Here $r$ is the regularity parameter and $p$ is the polynomial degree. Since we mainly consider one, two and three spatial dimensions, we  explicitly use $\grad$, $\curl$ and $\div$ instead of the exterior derivative $d$ in most cases. We  use ${\0{\mathcal{P}}}_{p}\Lambda^{k}(\Omega)$ to denote the finite element spaces with standard vanishing boundary conditions. For example, in 3D ${\0{\mathcal{P}}}_{p}\Lambda^{1}(\Omega)$ has vanishing tangential components and ${\0{\mathcal{P}}}_{p}\Lambda^{2}(\Omega)$ has vanishing normal components on $\partial \Omega$ when differential forms are represented by vector fields. Functions in ${\0{\mathcal{P}}}_{r, p}\Lambda^{k}$ may have stronger vanishing conditions on low dimensional simplexes depending on the continuity condition of $\mathcal{P}_{r, p}\Lambda^{k}$. 

We define $\ker \left ( d, V\right )$ as the kernel of the differential operator $d$ in the space $V$, i.e.
$$
\ker \left ( d, V\right ):=\{v\in V: dv=0\}.
$$

The rest of this paper is organised as follows.  In Section \ref{sec:hermite}, we construct the family with regularity parameter $r=1$ as a resolution of the Hermite element.  We verify the unisolvence and exactness of the new sequences. In Section \ref{sec:r2}, we construct the family $r=2$. In Section \ref{sec:bc}, we discuss boundary conditions.  In Section \ref{sec:geometric-decomposition}, we discuss the geometric decomposition and local exact sequences of the new complexes.
In Section \ref{sec:BGG}, we re-construct 2D Hu-Zhang elasticity elements combining BGG and the new de Rham families.
 We give concluding remarks in  Section \ref{sec:concluding}.

\section{Hermite family: $r=1$}\label{sec:hermite}

The Hermite family begins with Hermite elements in all dimensions.

\subsection{Complex in 1D} 

The 1D complex $(p\geq 2)$:
 \begin{equation}\label{1dl1}
\begin{CD}
\mathbb{R}@>>>\mathcal{P}_{1, p+1}\Lambda^{0}(\mathcal{T}_{h}^{1}) @>\mathrm{grad}>>\mathcal{P}_{1, p}\Lambda^{1}(\mathcal{T}_{h}^{1}) @>{} >>  0,
\end{CD}
\end{equation}
 consists of the $C^{1}$ Hermite element of degree $p+1$ ($\mathcal{P}_{1, p+1}\Lambda^{0}(\mathcal{T}_{h}^{1})$), and $C^{0}$ Lagrange element of degree $p$ ($\mathcal{P}_{1, p}\Lambda^{1}(\mathcal{T}_{h}^{1})$ ). It is obvious that this sequence is (globally) exact on intervals, because the gradient of the Hermite element with degree $p+1$ falls into the Lagrange element space of degree $p$, and conversely, if $\grad u_{h}={v}_{h}\in \mathcal{P}_{1, p}\Lambda^{1}(\mathcal{T}_{h}^{1})$, then $u_{h}$ is a piecewise polynomial of degree $p+1$, and has continuous first order derivatives at the vertices. This implies $u_{h}\in \mathcal{P}_{1, p+1}\Lambda^{0}(\mathcal{T}_{h}^{1}) $, i.e. $u_{h}$ belongs to the global Hermite space.

\subsection{Complex in 2D}

We describe the discrete version of sequence \eqref{sequence1}:
 \begin{equation}\label{2dl1}
\begin{CD}
\mathbb{R}@>>> \mathcal{P}_{1, p+2}\Lambda^{0}(\mathcal{T}_{h}^{2}) @>\curl>>\mathcal{P}_{1, p+1}\Lambda^{1}(\mathcal{T}_{h}^{2})   @>{\div} >> \mathcal{P}_{1, p}\Lambda^{2}(\mathcal{T}_{h}^{2}) @>{} >>  0,
\end{CD}
\end{equation}
where $p\geq 1$. For the lowest order case which starts from piecewise cubic polynomials, we show the finite element diagrams in Figure \ref{fig:div2d}.

We will use the Hermite elements to discretize  $H(\curl)$.   One can characterize the Hermite elements of degree $p$ as follows:
$$
\mathcal{P}_{1, p}\Lambda^{0}(\mathcal{T}_{h}^{2})=\{s\in H(\curl): s|_{f}\in \mathcal{P}_p , \forall f\in \mathcal{F}; s\in C^{1}(\mathcal{V}) \}.
$$

We define the  ${H}(\mathrm{div})$ finite element space of degree $p$ as
$$
\mathcal{P}_{1, p}\Lambda^{1}(\mathcal{T}_{h}^{2}):=\{\bm{v}\in {H}(\mathrm{div}): \bm{v}|_{f}\in (\mathcal{P}_p )^{2}, \forall f\in \mathcal{F};  \bm{v}\in C^{0}(\mathcal{V})\}.
$$
This was first introduced in Stenberg \cite{Stenberg2010}.

The degrees of freedom ($n=2$, $r=1$)  can be given as follows. The set of DoFs is empty when $p<0$ in $\mathcal{P}_{ p}\Lambda^{k}$.

\newcase{For $u\in \mathcal{P}_{1, p}\Lambda^{0}$:}
\begin{itemize}
\item 
function value $u(\bm{x})$ and first order derivatives $\partial_{i}u(\bm{x}), i=1, 2$ at each vertex $\bm{x}$,
\item
moments on each edge
$$
\int_{e}u\cdot q~ ds, \quad q\in \mathcal{P}_{p-4}(e), \forall e\in \mathcal{E},
$$
\item
moments on each element
$$
\int_{f}u\cdot q~ dx, \quad q\in \mathcal{P}_{p-3}(f), \forall f\in \mathcal{F}.
$$
\end{itemize}

\newcase{For $\bm{B}\in \mathcal{P}_{1, p}\Lambda^{1}$:}
\begin{itemize}
\item 
function value $\bm{B}(\bm{x})$  at each vertex $\bm{x}$,
\item
moments on each edge
$$
\int_{e}\left (\bm{B}\cdot \bm{\nu}\right )  q~ ds, \quad q\in \mathcal{P}_{p-2}(e), \forall e\in \mathcal{E},
$$
\item
moments on each element
$$
\int_{f}\bm{B}\cdot \bm{q}~ dx, \quad \bm{q}\in \mathcal{P}_{p-1}^{-}\Lambda^{1}(f), \forall f\in \mathcal{F}.
$$
\end{itemize}

\newcase{For $w\in \mathcal{P}_{1, p}\Lambda^{2}$:}
\begin{itemize}
\item
moments on each element
$$
\int_{f}w\cdot q~ dx, \quad q\in \mathcal{P}_p (f), \forall f\in \mathcal{F}.
$$
\end{itemize}

Each row (or column) of the Hu-Zhang stress element \cite{hu2015family,Hu2015} belongs to the vector valued space  $\mathcal{P}_{1, p}\Lambda^{1}(\mathcal{T}_{h}^{2})$.   Connections between
$\mathcal{P}_{1, p}\Lambda^{1}(\mathcal{T}_{h}^{2})$ and the Lagrange elements can be established based on a similar idea as the Hu-Zhang construction:   one can retain the normal degrees of freedom  on a face (edge in 2D) of the Lagrange elements,  and move the tangential  DoFs on that face (edge in 2D)  into the interior of the elements.
Alternatively we can consider decompositions of the shape function space. The space $\mathcal{P}_{1, p}\Lambda^{1}(\mathcal{T}_{h}^{2})$ can be decomposed as globally continuous Lagrange elements and   ${H}(\mathrm{div})$ bubble functions (shape functions with vanishing normal components, but the tangential components can be discontinuous). Specifically, we have the following lemma.
\begin{lemma}
We have the decomposition:
$$
\mathcal{P}_{1, p}\Lambda^{1}\left (\mathcal{T}_{h}^{2}\right )=\bm{L}_{h}^{p}+\bm{\Sigma}_{b}^{p}, \quad p\geq 2,
$$
where $\bm{L}_{h}^{p}$ is the Lagrange element of degree $p$, and the bubble function space $\bm{\Sigma}_{b}^{p}$ is defined as
$$
\bm{\Sigma}_{b}^{p}=\{ \bm{B}_{h}\in  \mathcal{P}_{1, p}\Lambda^{1}\left (\mathcal{T}_{h}^{2}\right ): \bm{B}_{h}\cdot \bm{\nu}_{e}=0, \forall e\in \mathcal{E}\},
$$
where $\bm{\nu}_{e}$ is the normal direction of $e$.
\end{lemma}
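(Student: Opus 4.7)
The plan is to give an explicit constructive decomposition: given $\bm{B}_h\in\mathcal{P}_{1,p}\Lambda^{1}(\mathcal{T}_{h}^{2})$, I would build a vector Lagrange function $\bm{L}_h\in\bm{L}_h^p$ whose Lagrange nodal values are prescribed from the available single-valued data of $\bm{B}_h$, and then verify that $\bm{b}_h:=\bm{B}_h-\bm{L}_h$ has vanishing normal trace on every edge. The key observation is that although $\bm{B}_h$ may have tangential jumps across edges, it is single-valued both at vertices (since $\bm{B}_h\in C^{0}(\mathcal{V})$) and in its normal trace on any edge (since $\bm{B}_h\in H(\div)$), and these are exactly the two kinds of data needed to fix enough Lagrange DoFs to make $\bm{b}_h\cdot\bm{\nu}_e$ vanish.

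Concretely, I would proceed as follows. First, at every vertex $\bm{v}\in\mathcal{V}$, set $\bm{L}_h(\bm{v}):=\bm{B}_h(\bm{v})$; this is well defined by the vertex continuity of $\mathcal{P}_{1,p}\Lambda^{1}$. Second, on each edge $e$ with unit normal $\bm{\nu}_e$ and tangent $\bm{\tau}_e$, at each of the $p-1$ interior Lagrange nodes $\bm{x}_i\in e$, define
\[
\bm{L}_h(\bm{x}_i)\cdot\bm{\nu}_e := \bm{B}_h(\bm{x}_i)\cdot\bm{\nu}_e,\qquad \bm{L}_h(\bm{x}_i)\cdot\bm{\tau}_e := 0.
\]
The normal component is well defined because the normal trace of $\bm{B}_h$ on $e$ is a single-valued polynomial of degree $p$, and the tangential assignment is arbitrary (I just fix it to zero). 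Third, at every interior Lagrange node of each triangle, set $\bm{L}_h$ equal to zero. These assignments are globally single-valued, so they produce a bona fide $\bm{L}_h\in\bm{L}_h^p$.

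Then I would set $\bm{b}_h:=\bm{B}_h-\bm{L}_h$. Since both summands lie in $\mathcal{P}_{1,p}\Lambda^{1}(\mathcal{T}_{h}^{2})$, so does $\bm{b}_h$. It remains to check $\bm{b}_h\cdot\bm{\nu}_e=0$ for every $e\in\mathcal{E}$. On a fixed $e$, the scalar polynomial $\bm{b}_h\cdot\bm{\nu}_e$ has degree at most $p$ and vanishes at $p+1$ distinct Lagrange nodes, namely the two endpoints (by the vertex step) and the $p-1$ interior nodes (by the edge step), hence vanishes identically. Thus $\bm{b}_h\in\bm{\Sigma}_b^p$, which yields $\bm{B}_h=\bm{L}_h+\bm{b}_h\in\bm{L}_h^p+\bm{\Sigma}_b^p$.

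The only place requiring real care, and the only candidate for an obstacle, is confirming that the prescribed nodal values for $\bm{L}_h$ are globally single-valued, so that $\bm{L}_h$ really is $C^0$ across edges; this reduces precisely to the two continuity properties of $\mathcal{P}_{1,p}\Lambda^{1}$ already mentioned (vertex continuity and normal continuity), together with the harmless fact that the tangential values on edges and the interior face values have been chosen from a single side (here, set to zero). The hypothesis $p\geq 2$ enters exactly here: for $p\geq 2$ the Lagrange space has at least one interior node per edge, which is what makes the assignment in the edge step nonempty and independent of the vertex data.
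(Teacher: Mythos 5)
Your proposal is correct and follows essentially the same route as the paper: both define the Lagrange part by copying the (single-valued) vertex values and edge normal DoFs of $\bm{B}_h$ while setting the remaining Lagrange DoFs to zero, and then observe that the difference has vanishing normal trace on every edge, hence lies in $\bm{\Sigma}_b^p$. Your version merely spells out the pointwise details (the degree-$p$ normal trace vanishing at $p+1$ nodes) that the paper compresses into ``by the conforming property.''
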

\begin{proof}
It is obvious that $\bm{L}_{h}^{p}+\bm{\Sigma}_{b}^{p}\subset \mathcal{P}_{1, p}\Lambda^{1}(\mathcal{T}_{h}^{2})$. Conversely,  given $\bm{u}_{h}\in \mathcal{P}_{1, p}\Lambda^{1}(\mathcal{T}_{h}^{2})$,  define $\tilde{\bm{u}}_{h}\in \bm{L}_{h}^{p}$ by specifying its DoFs: the vertex DoFs and the normal DoFs on the edges are defined to be the same as $\bm{u}_{h}$, while other DoFs are defined to be zero. By definition $\bm{u}_{h}-\tilde{\bm{u}}_{h}$ has vanishing normal DoFs, and according to the conforming property, we know that the normal components of $\bm{u}_{h}-\tilde{\bm{u}}_{h}$ vanish. Therefore $\bm{u}_{h}-\tilde{\bm{u}}_{h}\in \bm{\Sigma}_{b}^{p}$.
\end{proof}
\begin{center}
\begin{figure}

\setlength{\unitlength}{1.2cm}
\begin{picture}(2,2)(-0.8,0)
\put(0,0){
\begin{picture}(2,2)
\put(-1, 0){\line(1,2){1}} 
\put(0, 2){\line(1,-2){1}}
\put(-1,0){\line(1,0){2}}
\put(-1.,0){\circle*{0.1}}
\put(-1.,0){\circle{0.2}}
\put(1.,0){\circle*{0.1}}
\put(1.,0){\circle{0.2}}
\put(0,2){\circle*{0.1}}
\put(0,2){\circle{0.2}}
\put(0, 0.6){\circle*{0.1}}
\end{picture}
}

\put(1.5, 1){\vector(1, 0){1}}
\put(1.68, 1.15){$\curl$}

\put(4,0){
\put(-1, 0){\line(1,2){1}} 
\put(0, 2){\line(1,-2){1}}
\put(-1,0){\line(1,0){2}}
\put(-0.05,2){\circle*{0.1}}
\put(0.05,2){\circle*{0.1}}
\put(-1.05,0){\circle*{0.1}}
\put(-0.95,0){\circle*{0.1}}
\put(-1.05,0){\circle*{0.1}}
\put(-0.95,0){\circle*{0.1}}
\put(1.05,0){\circle*{0.1}}
\put(0.95,0){\circle*{0.1}}
\put(0.5, 1){\vector(2,1){0.3}}
\put(-0.5, 1){\vector(-2,1){0.3}}
\put(0, 0){\vector(0,-1){0.3}}
\put(0, 0.8){\circle*{0.1}}
\put(-0.1, 0.6){\circle*{0.1}}
\put(0.1, 0.6){\circle*{0.1}}
}

\put(5.5, 1){\vector(1, 0){1}}
\put(5.75, 1.1){{$\div$}}

\put(8,0){
\begin{picture}(2,2)
\put(-1, 0){\line(1,2){1}} 
\put(0, 2){\line(1,-2){1}}
\put(-1,0){\line(1,0){2}}
\put(0, 0.8){\circle*{0.1}}
\put(-0.1, 0.6){\circle*{0.1}}
\put(0.1, 0.6){\circle*{0.1}}
\end{picture}
}

\end{picture}
\caption{ Finite element sequence of lowest order (2D, r=1): $H(\curl)\rightarrow {H}(\div)\rightarrow L^{2}$ with the local shape function spaces $\mathcal{P}_{3}\rightarrow (\mathcal{P}_{2})^{2}\rightarrow \mathcal{P}_{1}$. }
\label{fig:div2d}
\end{figure}
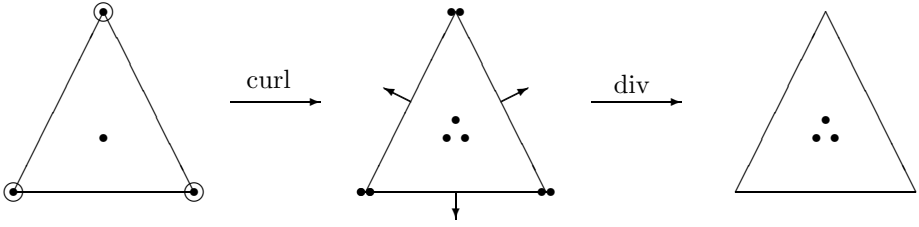
\end{center}


\begin{theorem}
The sequence \eqref{2dl1} is a complex, which is exact on contractible domains.
\end{theorem}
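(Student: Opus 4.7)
The starting observation is that the new complex sits inside the classical BDM complex: $\mathcal{P}_{1, p+2}\Lambda^{0} \subset \mathcal{P}_{p+2}\Lambda^{0}$, $\mathcal{P}_{1, p+1}\Lambda^{1} \subset \mathcal{P}_{p+1}\Lambda^{1}$, and $\mathcal{P}_{1, p}\Lambda^{2} = \mathcal{P}_{p}\Lambda^{2}$, and the outer complex is known to be exact on contractible domains (see \cite{arnold2014periodic}). I would exploit this inclusion throughout.

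First, I would verify the complex property. The identity $\div \circ \curl = 0$ and the correct polynomial-degree reductions are automatic; the only nonobvious point is that $\curl u \in \mathcal{P}_{1, p+1}\Lambda^{1}$ whenever $u \in \mathcal{P}_{1, p+2}\Lambda^{0}$, i.e.\ $\curl u$ is single-valued at every vertex. Since $u \in C^{1}(\mathcal{V})$, the gradient $\nabla u$ is continuous at each vertex, hence so is its rotation $\curl u = (\partial_{2} u,-\partial_{1} u)$. The target $\mathcal{P}_{1, p}\Lambda^{2}$ imposes no inter-element continuity, so $\div \bm{v}$ lies in it for every $\bm{v} \in \mathcal{P}_{1, p+1}\Lambda^{1}$.

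Next I would establish exactness at $\mathcal{P}_{1, p+2}\Lambda^{0}$ and $\mathcal{P}_{1, p+1}\Lambda^{1}$ directly. If $\curl u = 0$, then $\nabla u = 0$ on every element, and global $C^{0}$-conformity of the Hermite space forces $u$ to be a single constant. For middle exactness, take $\bm{v} \in \mathcal{P}_{1, p+1}\Lambda^{1}$ with $\div \bm{v} = 0$; the known exactness of the classical BDM complex yields a Lagrange potential $u \in \mathcal{P}_{p+2}\Lambda^{0}$, unique up to an additive constant, with $\curl u = \bm{v}$. Because $\bm{v}$ is single-valued at every vertex, $\nabla u$ is too, so $u \in C^{1}(\mathcal{V})$ and hence $u \in \mathcal{P}_{1, p+2}\Lambda^{0}$.

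Surjectivity of $\div$ I would extract from a dimension count. Summing the DoFs listed above and invoking Euler's identity $V - E + F = 1$, a short algebraic computation gives
\[
\dim \mathcal{P}_{1, p+2}\Lambda^{0} - \dim \mathcal{P}_{1, p+1}\Lambda^{1} + \dim \mathcal{P}_{1, p}\Lambda^{2} \;=\; V - E + F \;=\; 1,
\]
which is precisely the alternating sum predicted by exactness (given $\dim \ker \curl = 1$). Together with the complex property and the exactness already obtained at the first two spots, this forces $\operatorname{Im} \div = \mathcal{P}_{1, p}\Lambda^{2}$.

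The main obstacle I anticipate is the legitimacy of the dimension count in the last step: it relies on the unisolvence of the listed DoFs for $\mathcal{P}_{1, p+1}\Lambda^{1}$, which must be proved separately. A secondary point -- that the Lagrange lift $u$ in the middle-exactness step automatically belongs to the Hermite subspace -- is only a one-line consequence of $\bm{v}$'s vertex continuity, but deserves to be stated explicitly since it is the mechanism transferring exactness from the outer BDM complex to the new subcomplex.
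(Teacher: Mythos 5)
Your proof is correct, but it routes the argument differently from the paper. The paper imports surjectivity of $\div:\mathcal{P}_{1,p+1}\Lambda^{1}(\mathcal{T}_{h}^{2})\to\mathcal{P}_{1,p}\Lambda^{2}(\mathcal{T}_{h}^{2})$ from the inf-sup condition proved by Stenberg, and then obtains exactness at the middle spot as a by-product of the dimension identity $\dim\Lambda^{0}-\dim\Lambda^{1}+\dim\Lambda^{2}=V-E+F=1$. You invert this: you prove middle exactness directly --- a divergence-free $\bm{v}$ has a Lagrange potential $u\in\mathcal{P}_{p+2}\Lambda^{0}$ by the classical BDM exactness, and single-valuedness of $\bm{v}=\curl u$ at the vertices forces $\nabla u$, hence $u$, into $C^{1}(\mathcal{V})$, i.e.\ into the Hermite subspace --- and then extract surjectivity of $\div$ from the same dimension count. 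Your middle-exactness mechanism (intersecting the kernel of $d$ in the larger classical space with the extra vertex continuity) is precisely the technique the paper itself uses in 3D to prove exactness at $\mathcal{P}_{1,p+2}\Lambda^{1}(\mathcal{T}_{h}^{3})$, so it is fully consistent with the authors' toolbox. What your route buys is self-containedness: it does not need the analytic inf-sup machinery, only the algebraic exactness of the classical complex plus counting. What it costs is the same as the paper's route: the dimension formulas for the three spaces rest on unisolvence of the listed DoFs (which the paper also delegates to Stenberg for the 2D $H(\div)$ element), and you correctly flag that this must be supplied. Your alternating-sum computation checks out against the paper's stated dimensions, so the argument closes.
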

\begin{proof}
From the definitions of $\mathcal{P}_{1, p+2}\Lambda^{0}(\mathcal{T}_{h}^{2})$, $ \mathcal{P}_{1, p+1}\Lambda^{1}(\mathcal{T}_{h}^{2})$ and $\mathcal{P}_{1, p}\Lambda^{2}(\mathcal{T}_{h}^{2})$, it is obvious that $\curl \mathcal{P}_{1, p+2}\Lambda^{0}(\mathcal{T}_{h}^{2})\subset  \mathcal{P}_{1, p+1}\Lambda^{1}(\mathcal{T}_{h}^{2})$ and $\mathrm{div} \mathcal{P}_{1, p+1}\Lambda^{1}(\mathcal{T}_{h}^{2})\subset \mathcal{P}_{1, p}\Lambda^{2}(\mathcal{T}_{h}^{2})$.  

Next we show that \eqref{2dl1} is exact.  From the inf-sup condition proved in \cite{Stenberg2010},  we know that the $\mathrm{div}$ operator is onto, i.e. $\div  \mathcal{P}_{1, p+1}\Lambda^{1}(\mathcal{T}_{h}^{2})=\mathcal{P}_{1, p}\Lambda^{2}(\mathcal{T}_{h}^{2})$. Therefore it suffices to count the dimensions.  
We note that the global dimension of  Hermite element of degree $p+2$ is ${\dim}(\mathcal{P}_{1, p+2}\Lambda^{0}(\mathcal{T}_{h}^{2}))=  3V+(p-1)E+1/2p(p+1)F$, and the dimensions of $ \mathcal{P}_{1, p+1}\Lambda^{1}(\mathcal{T}_{h}^{2})$ and $\mathcal{P}_{1, p}\Lambda^{2}(\mathcal{T}_{h}^{2})$ are ${\dim}( \mathcal{P}_{1, p+1}\Lambda^{1}(\mathcal{T}_{h}^{2}))=2V+pE+(2{p+3 \choose 2}  -3p-6)F$ and ${\dim}(\mathcal{P}_{1, p}\Lambda^{2}(\mathcal{T}_{h}^{2}))={p+2 \choose 2}F$ respectively.  By straightforward calculations, we have 
$$
 \dim\left (\mathcal{P}_{1, p+1}\Lambda^{1}(\mathcal{T}_{h}^{2})\right )= \left [{\dim}(\mathcal{P}_{1, p+2}\Lambda^{0}(\mathcal{T}_{h}^{2}))-1\right ]+ {\dim}(\mathcal{P}_{1, p}\Lambda^{2}(\mathcal{T}_{h}^{2})).
$$
\end{proof}

By rotating the elements in sequence \eqref{2dl1}, we can get another $\grad$-$\rot$ finite element complex.

\paragraph{Basis function.}


The basis functions of $ \mathcal{P}_{1, p+1}\Lambda^{1}(\mathcal{T}_{h}^{2})$  can be written in a similar way as the Lagrange basis. For example, for $\mathcal{P}_{1, p+1}\Lambda^{1}(\mathcal{T}_{h}^{2})$ with continuous normal components, we write the two basis functions associated to an edge Lagrange point as one normal basis and one tangential basis. We require each normal basis  to be single-valued in the two elements sharing the edge, while we allow a tangential basis function  taking different values in the two neighbour elements. 

We explicitly construct a basis  of $ \mathcal{P}_{1, p}\Lambda^{1}(\mathcal{T}_{h}^{2})$. Below we will use $\phi_{\bm{x}}^{p}$ to denote the nodal basis function of the Lagrange elements at a Lagrange interpolation point $\bm{x}$. The superscript $p$ in $\phi_{\bm{x}}^{p}$ indicates that it is a polynomial of degree $p$. For simplicity of presentation, we omit this superscript below when there is no possible confusion, i.e. we will write $\phi_{\bm{x}}$ instead.
We will use $\bm{e}_{i}$ to denote the canonical basis $(0, \cdots, 1, \cdots, 0)$ in the Euclidean space $\mathbb{R}^{n}$.

Basis functions of the ${H}(\mathrm{div})$ finite element space $ \mathcal{P}_{1, p}\Lambda^{1}(\mathcal{T}_{h}^{2})$ can be constructed as:
\begin{enumerate}
\item
vertex-based basis functions: given $\bm{x}\in \mathcal{V}$, its two basis functions are 
$$
\bm{v}_{\bm{x}, i}=\phi_{\bm{x}}\bm{e}_{i}, \quad i=1, 2,
$$
\item
edge-based basis functions: given a Lagrange point $\bm{x}$ on an edge $e$, its associated basis function with the normal direction:
$$
\bm{v}_{e, \bm{x}}= \phi_{\bm{x}}\bm{\nu}_{e},
$$
where $\bm{\nu}_{e}$ is the normal vector of the edge $e$,
\item
edge-based basis functions: given Lagrange point $\bm{x}$ on an edge $e$, its associated basis functions with the tangential direction:
$$
\bm{v}_{e, \bm{x}, i}= \phi_{\bm{x}}|_{f_{i}}\bm{\tau}_{e}, \quad i=1,2,
$$
where $f_{1}$ and $f_{2}$ are the two elements sharing the edge $e$, $\bm{\tau}_{e}$ is the tangential vector of the edge $e$,
\item
interior basis functions: at an interior Lagrange point $\bm{x}$, its two associated basis functions: 
$$
\bm{v}_{f, \bm{x}, i}=\phi_{\bm{x}}\bm{e}_{i}, \quad i=1,2.
$$
\end{enumerate}

\subsection{Complex in 3D}

We now turn to the 3D complexes.  For $p\geq 0$ we formally write the sequence as
{\footnotesize
 \begin{equation}\label{3dl1}
\begin{CD}
\mathbb{R}@>>>\mathcal{P}_{1, p+3}\Lambda^{0}(\mathcal{T}_{h}^{3}) @>\grad>>\mathcal{P}_{1, p+2}\Lambda^{1}(\mathcal{T}_{h}^{3})  @>{\curl}
>>  \mathcal{P}_{1, p+1}\Lambda^{2}(\mathcal{T}_{h}^{3})      @>\div>>    \mathcal{P}_{1, p}\Lambda^{3}(\mathcal{T}_{h}^{3}) @>{} >>  0.
\end{CD}
\end{equation}
}

Here $\mathcal{P}_{1, p+3}\Lambda^{0}(\mathcal{T}_{h}^{3}) $ is the Hermite finite element in 3D with polynomial degree $p+3$, and $\mathcal{P}_{1, p+2}\Lambda^{1}(\mathcal{T}_{h}^{3})$ is a Stenberg-type $H(\curl)$ space, which is tangentially continuous on edges and faces and $C^{0}$ continuous at the vertices.  To our knowledge, this element is new in the literature.

We can give the following DoFs for $\mathcal{P}_{1, p}\Lambda^{1}(\mathcal{T}_{h}^{3})$ (as shown in Figure \ref{fig:3dr1}):
\begin{enumerate}
\item
function values $\bm{u}(\bm{x})$ at each vertex $\bm{x}\in \mathcal{V}$,
\item
$p-1$ tangential DoFs on each edge $e$: 
$$
\int_{e}\left (\bm{u}\cdot \bm{\tau}_{e}\right ) {q}, \quad\forall {q}\in \mathcal{P}_{p-2}\Lambda^{0}(e).
$$ 
\item
DoFs on each face $f$: 
$$
\int_{f} \left ( \bm{u}\times \bm{\nu}_{f}\right ) \cdot  \bm{\omega}, \quad \forall \bm{\omega}\in \mathcal{P}_{p-1}^{-}\Lambda^{1}(f),
$$
where $\bm{u}\times \bm{\nu}_{f}$ is understood as a two dimensional vector on face $f$, 
\item
interior DoFs on each 3D cell $t$: 
$$
\int_{t} \bm{u} \cdot \bm{\eta}, \quad \forall \bm{\eta}\in \mathcal{P}_{p-2}^{-}\Lambda^{2}(t).
$$
\end{enumerate}

\begin{center}
\begin{figure}
\setlength{\unitlength}{1.8mm}
\begin{picture}(20,20)(0, 0)
\put(36,0){
\put(-10, 0){\line(1,2){10}} 
\put(0, 20){\line(1,-2){10}}
\put(-10,0){\line(1,0){20}}
\multiput(-10,0)(1,0.6){10}{\circle*{0.01}}
\multiput(10,0)(-1,0.6){10}{\circle*{0.01}}
\multiput(0,6)(0,1.3){10}{\circle*{0.01}}

\put(-10.5,-0.5){\circle*{0.6}}
\put(-9.5,-0.5){\circle*{0.6}}
\put(-10,0.5){\circle*{0.6}}
\put(10.5,-0.5){\circle*{0.6}}
\put(9.5,-0.5){\circle*{0.6}}
\put(10,0.5){\circle*{0.6}}
\put(0.5,19.5){\circle*{0.6}}
\put(-0.5,19.5){\circle*{0.6}}
\put(0,20.5){\circle*{0.6}}
\put(0.5, 5.5){\circle*{0.6}}
\put(-0.5, 5.5){\circle*{0.6}}
\put(0, 6.5){\circle*{0.6}}

\put(-5, 6){+3}
\put(3, 6){+3}
\put(0, 2){+3}
\put(-3, 8){+3}

{\linethickness{1pt}
\put(5,10){\vector(1,-2){1.8}}
\put(-5,10){\vector(-1, -2){1.8}}
\put(0,11){\vector(0,-2){2.2}}
\put(5,3){\vector(3,-2){2.3}}
\put(-5,3){\vector(-3,-2){2.3}}
\put(0,0){\vector(1,0){2.3}}
}
}
\end{picture}
\caption{Lowest order ($\mathcal{P}_{2}$) $H(\curl)$ element of continuity $r=1$.}
\label{fig:3dr1}
\end{figure}
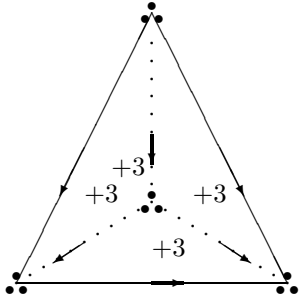
\end{center}

In order to prove the unisolvence, we need the following two results which can be found, for example,  in Arnold, Falk and Winther \cite{Arnold.D;Falk.R;Winther.R.2006a} (Lemma 4.7).
\begin{lemma}\label{interDOF}
Let $\omega\in {\0{\mathcal{P}}}_{p}\Lambda^{k}(t)$. If 
$$
\int_{t}\omega\wedge \eta=0, \quad \forall \eta\in \mathcal{P}_{p-n+k}^{-}\Lambda^{n-k}(t),
$$
where $n$ is the dimension of $t$, we have $\omega=0$.
\end{lemma}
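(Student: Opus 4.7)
The lemma asserts the non-degeneracy, in its first argument, of the bilinear pairing
$$
B(\omega,\eta):=\int_t \omega\wedge \eta
$$
between $\0{\mathcal{P}}_p\Lambda^k(t)$ and $\mathcal{P}_{p-n+k}^{-}\Lambda^{n-k}(t)$. Since $\omega\wedge\eta$ is an $n$-form on the $n$-simplex $t$, $B$ is well-defined, and the conclusion is exactly that the induced map $\omega\mapsto B(\omega,\cdot)$ is injective. My plan is to carry this out in two stages.

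First (dimension match), I would verify that
$$
\dim\0{\mathcal{P}}_p\Lambda^k(t)=\dim\mathcal{P}_{p-n+k}^{-}\Lambda^{n-k}(t).
$$
Both sides admit closed-form expressions in terms of binomial coefficients from the Arnold-Falk-Winther periodic table, and the identity reduces to straightforward bookkeeping (one can cross-check in the easy boundary cases, e.g.\ $n=1,k=0$, where $\0{\mathcal{P}}_p\Lambda^0([0,1])$ has dimension $p-1$ and $\mathcal{P}_{p-1}^{-}\Lambda^{1}([0,1])=\mathcal{P}_{p-2}\Lambda^{1}$ has the same dimension). With equal dimensions in hand, non-degeneracy follows from injectivity alone.

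Second (injectivity), I would exploit the Koszul-polynomial decomposition
$$
\mathcal{P}_q^{-}\Lambda^j=\mathcal{P}_{q-1}\Lambda^j+\kappa\mathcal{H}_{q-1}\Lambda^{j+1},
$$
applied with $q=p-n+k$ and $j=n-k$. A general test form splits as $\eta=\eta_1+\kappa\alpha$ with $\eta_1\in\mathcal{P}_{q-1}\Lambda^{j}$ and $\alpha\in\mathcal{H}_{q-1}\Lambda^{j+1}$. The key step is to rewrite $\int_t\omega\wedge\kappa\alpha$ via the Leibniz rule for $\kappa$ combined with Stokes' theorem; the boundary term drops because $\omega$ has vanishing trace on $\partial t$, leaving an interior pairing that involves $d\omega$ against $\alpha$. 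Combining this with the vanishing of $\int_t\omega\wedge\eta_1$ for all $\eta_1\in\mathcal{P}_{q-1}\Lambda^{j}$ exhausts enough test forms so that, by the unrestricted $L^2$-duality between $\mathcal{P}_q\Lambda^k$ and $\mathcal{P}_q\Lambda^{n-k}$ on an $n$-simplex, one forces $\omega=0$.

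The main obstacle I anticipate is precisely the Koszul/Stokes bookkeeping in the second stage: justifying rigorously that the zero-trace hypothesis eliminates the boundary contribution when shifting derivatives, and that the combined test space $\mathcal{P}_{q-1}\Lambda^{j}+\kappa\mathcal{H}_{q-1}\Lambda^{j+1}$ is genuinely rich enough to separate every nonzero $\omega\in\0{\mathcal{P}}_p\Lambda^k(t)$. This is the substantive content of Lemma 4.7 of Arnold, Falk and Winther, and for the detailed manipulations I would follow their argument verbatim rather than reinvent the wheel.
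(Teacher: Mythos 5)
The paper offers no proof of this lemma at all: it is quoted as Lemma 4.7 of Arnold, Falk and Winther with a pointer to that reference, so your closing decision to follow their argument verbatim coincides with what the paper actually does, and the citation is what carries the proof in both cases. However, your sketch of what that argument looks like does not match AFW, and taken on its own terms it would not close. AFW prove the injectivity directly, with neither a dimension count nor integration by parts: expanding $\omega=\sum_\sigma a_\sigma\,d\lambda_{\sigma(1)}\wedge\cdots\wedge d\lambda_{\sigma(k)}$ in barycentric coordinates, the vanishing-trace hypothesis forces each coefficient $a_\sigma$ to be divisible by the product of the $n-k$ barycentric coordinates complementary to $\sigma$, and pairing $\omega$ against an explicit Whitney-type test form built from the quotients turns $\int_t\omega\wedge\eta$ into a positive weighted sum of squares of the $a_\sigma$. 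Your stage-one dimension identity is true, but in AFW it is a separate lemma feeding the unisolvence count, not an ingredient of this injectivity statement. More importantly, stage two has two genuine problems: (i) there is no ``Stokes for $\kappa$'' --- the Koszul operator is not a derivative, and the only identity available is the purely algebraic $\omega\wedge\kappa\alpha=\pm\,\kappa\omega\wedge\alpha$ (valid because $\omega\wedge\alpha$ is an $(n+1)$-form and $\kappa$ is an antiderivation), which produces $\kappa\omega$ rather than $d\omega$ and generates no boundary term for the zero-trace hypothesis to kill; (ii) test forms of degree $q-1=p-n+k-1<p$ cannot trigger any ``unrestricted $L^2$-duality'' with $\mathcal{P}_{p}\Lambda^{k}(t)$ --- the degrees do not match, and the entire content of the lemma is that this deficit is compensated by the boundary conditions on $\omega$, which your duality step never invokes. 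Since you ultimately defer to AFW for the substance, your proposal lands where the paper does; but the bridge you sketch toward that reference is not the one the reference actually builds.
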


\begin{lemma}\label{lem:dimPminus} 
(c.f. \cite{Arnold.D;Falk.R;Winther.R.2006a} (3.15)) We have the dimension count:
$$
\mathrm{dim}\mathcal{P}_{p}^{-}\Lambda^{k}(\mathbb{R}^{n})={k+p-1 \choose k}{n+p \choose n-k}.
$$
\end{lemma}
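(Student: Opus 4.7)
The plan is to prove the formula via the Koszul-based direct sum decomposition
$$\mathcal{P}_p^-\Lambda^k(\R^n) \;=\; \mathcal{P}_{p-1}\Lambda^k(\R^n) \;\oplus\; \kappa\mathcal{H}_{p-1}\Lambda^{k+1}(\R^n),$$
where $\kappa$ is the Koszul contraction $(\kappa\omega)_x(v_1,\dots,v_{k-1}) = \omega_x(x,v_1,\dots,v_{k-1})$ and $\mathcal{H}_r$ denotes homogeneous polynomials of degree $r$. Directness of the sum follows from the Cartan-type homotopy $\kappa d + d\kappa = (r+k)\,\mathrm{id}$ on $\mathcal{H}_r\Lambda^k$ for $r+k \geq 1$, which I would establish by a direct calculation from the definitions of $d$ and $\kappa$ acting on a monomial form.

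With the decomposition in hand, the count reduces to two pieces. The first is immediate: $\dim\mathcal{P}_{p-1}\Lambda^k(\R^n) = \binom{n+p-1}{n}\binom{n}{k}$, by tensoring scalar polynomials of degree at most $p-1$ with the $\binom{n}{k}$-dimensional space of constant alternating $k$-forms. For the second, the same homotopy identity shows that the Koszul complex at fixed total degree
$$0 \to \mathcal{H}_{p+k-n}\Lambda^{n} \xrightarrow{\kappa} \mathcal{H}_{p+k-n+1}\Lambda^{n-1} \xrightarrow{\kappa} \cdots \xrightarrow{\kappa} \mathcal{H}_{p+k}\Lambda^0 \to 0$$
is exact in positive form degree. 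Consequently $\ker(\kappa|_{\mathcal{H}_{p-1}\Lambda^{k+1}}) = \kappa\mathcal{H}_{p-2}\Lambda^{k+2}$, and iterating rank--nullity yields the alternating sum
$$\dim\kappa\mathcal{H}_{p-1}\Lambda^{k+1} \;=\; \sum_{j\geq 0}(-1)^j \binom{n+p-2-j}{n-1}\binom{n}{k+1+j},$$
truncated at $j = \min(p-1,n-k-1)$.

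The concluding step is a combinatorial simplification: adding this alternating sum to $\binom{n+p-1}{n}\binom{n}{k}$ must collapse to $\binom{p+k-1}{k}\binom{n+p}{n-k}$, which is a Chu--Vandermonde style telescoping. This is where I expect the main obstacle --- not in ideas, but in bookkeeping, since when $k$ is close to $n$ or $p$ is small several summands vanish and the identity has to be verified at the boundary. An alternative route that avoids the identity entirely is induction on $p$: the base case $p=1$ is the Whitney $k$-form count $\binom{n+1}{k+1}$, and the inductive step compares $\mathcal{P}_p^-\Lambda^k$ with $\mathcal{P}_{p-1}^-\Lambda^k$, the difference being a single homogeneous slice whose dimension is readily computed. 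Either route rests on the Koszul homotopy identity as the one non-trivial input.
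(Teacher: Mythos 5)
Your argument is correct, and it is essentially the proof of the result in the source the paper cites: the paper itself gives no proof of this lemma (it is quoted as equation (3.15) of Arnold--Falk--Winther), and that reference establishes it exactly as you propose, via the direct sum $\mathcal{P}_p^-\Lambda^k=\mathcal{P}_{p-1}\Lambda^k\oplus\kappa\mathcal{H}_{p-1}\Lambda^{k+1}$, the homotopy identity $d\kappa+\kappa d=(r+k)\,\mathrm{id}$ on $\mathcal{H}_r\Lambda^k$, exactness of the Koszul complex, and the resulting alternating binomial sum. The only small inaccuracy is that directness of the sum follows most directly from homogeneity ($\kappa$ raises polynomial degree, so $\kappa\mathcal{H}_{p-1}\Lambda^{k+1}\subset\mathcal{H}_{p}\Lambda^{k}$, which meets $\mathcal{P}_{p-1}\Lambda^k$ only in $0$) rather than from the homotopy formula; the concluding Vandermonde-type telescoping you defer does go through, as spot checks such as $(n,k,p)=(3,1,2)$ giving $12+8=20=\binom{2}{1}\binom{5}{2}$ confirm.
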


We now state the unisolvence results:
\begin{lemma}\label{lem:unisol-2dr1}
The DoFs of $\mathcal{P}_{1, p}\Lambda^{1}(\mathcal{T}_{h}^{3})  $ are unisolvent.
\end{lemma}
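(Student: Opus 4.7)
The plan is the standard two-step unisolvence argument: first match the DoF count to the local dimension, then show that the vanishing of all DoFs on a tetrahedron $t$ forces $\bm u \equiv 0$.

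For the counting step, the local shape space is $(\mathcal{P}_p)^3$ of dimension $3\binom{p+3}{3}=\tfrac12(p+1)(p+2)(p+3)$. Using Lemma~\ref{lem:dimPminus}, $\dim\mathcal{P}_{p-1}^{-}\Lambda^{1}(f)=p^{2}-1$ and $\dim\mathcal{P}_{p-2}^{-}\Lambda^{2}(t)=\tfrac12(p-1)(p-2)(p+1)$, so summing
$3\cdot 4+(p-1)\cdot 6+(p^{2}-1)\cdot 4+\tfrac12(p-1)(p-2)(p+1)$
collapses to $\tfrac12(p+1)(p+2)(p+3)$, as required.

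For the vanishing step I would peel off the DoFs by increasing dimension of the supporting simplex. Fix $\bm u\in(\mathcal{P}_{p})^{3}$ with all DoFs zero.
\emph{(i) Edges.} On an edge $e$, $\bm u\cdot\bm\tau_{e}$ is a polynomial of degree $\leq p$ that vanishes at both endpoints (from the vertex DoFs, which give the full vector $\bm u(\bm x)=0$). Writing $\bm u\cdot\bm\tau_{e}=b_{e}q$ with $b_{e}$ the edge bubble and $q\in\mathcal{P}_{p-2}(e)$, and testing against $q$, the edge DoFs yield $\int_{e}b_{e}q^{2}=0$, hence $q=0$ and $\bm u\cdot\bm\tau_{e}\equiv 0$ on $e$.
\emph{(ii) Faces.} On each face $f$, identify the in-plane part $\bm u\times\bm\nu_{f}$ with a 1-form $\omega\in\mathcal{P}_{p}\Lambda^{1}(f)$. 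Its tangential trace on each edge $e\subset\partial f$ is (up to sign and rotation within the plane of $f$) $\bm u\cdot\bm\tau_{e}$, which we just showed vanishes; also the vertex values of $\omega$ are $0$. Hence $\omega\in\mathring{\mathcal{P}}_{p}\Lambda^{1}(f)$. The face DoFs $\int_{f}(\bm u\times\bm\nu_{f})\cdot\bm\omega$ translate, after the standard 90-degree rotation in the face that converts a dot-product pairing into the wedge pairing, into $\int_{f}\omega\wedge\eta=0$ for all $\eta\in\mathcal{P}_{p-1}^{-}\Lambda^{1}(f)$. Lemma~\ref{interDOF} (with $n=2$, $k=1$) then gives $\omega=0$, so $\bm u\times\bm\nu_{f}=0$ on every face.
\emph{(iii) Interior.} Since $\bm u$ has vanishing tangential trace on $\partial t$, the associated 1-form $\omega_{\bm u}$ lies in $\mathring{\mathcal{P}}_{p}\Lambda^{1}(t)$. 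The interior DoFs say $\int_{t}\bm u\cdot\bm\eta=\int_{t}\omega_{\bm u}\wedge\eta=0$ for all $\eta\in\mathcal{P}_{p-2}^{-}\Lambda^{2}(t)$ (using the vector-proxy identification of 2-forms with vector fields in 3D and the wedge-becomes-dot-product identity). Lemma~\ref{interDOF} (with $n=3$, $k=1$, so $p-n+k=p-2$) now forces $\bm u=0$.

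The only delicate point I expect is step (ii): making the translation between the mixed dot-product/cross-product formulation of the face DoFs and the wedge-product pairing of Lemma~\ref{interDOF} fully rigorous, so that the $\mathcal{P}_{p-1}^{-}\Lambda^{1}(f)$ test space is correctly mapped to itself under the rotation in the face. Everything else is an application of Lemma~\ref{interDOF} plus the 1D edge-bubble argument.
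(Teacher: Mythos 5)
Your proposal is correct and follows essentially the same route as the paper's own proof: match the DoF count to $\dim(\mathcal{P}_p(t))^3$ using Lemma \ref{lem:dimPminus}, then kill $\bm u$ by peeling off vertex/edge, face, and interior DoFs via Lemma \ref{interDOF}. You simply spell out the edge-bubble step and the dot-product-to-wedge translation on faces, which the paper leaves implicit.
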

\begin{proof}
First we check the dimension. From Lemma \ref{lem:dimPminus}, we see
$$
\mathrm{dim}\left (\mathcal{P}_{p-1}^{-}\Lambda^{1}(f)\right )={p-1 \choose 1}{p+1 \choose 1}=(p-1)(p+1),
$$
and
$$
\mathrm{dim}\left ( \mathcal{P}_{p-2}^{-}\Lambda^{2}(t)\right )={p-1\choose 2}{p+1\choose 1}=\frac{1}{2}(p-2)(p-1)(p+1).
$$
Therefore the dimension of DoFs is $3V+(p-1)E+(p-1)(p+1)F+ 1/2(p-2)(p-1)(p+1)T$. On one element ($V=4, E=6, F=4, T=1$), this amounts to 
$$
3\times 4+6(p-1)+4(p-1)(p+1)+1/2(p-2)(p-1)(p+1)=\frac{1}{2}p^{3}+3p^{2}+\frac{11}{2}p+3,
$$
which is the same as the dimension of the space of 3D polynomials of degree $p$:
$$
\mathrm{dim}\left (\mathcal{P}_{p}(t)^{3}\right )=3\cdot {p+3\choose 3}=\frac{1}{2}(p^{3}+6p^{2}+11p+6).
$$
Then it suffices to show that $\bm{u}=0$ if all the DoFs are zero.

From the vertex and edge DoFs,  it is obvious that $\bm{u}\cdot\bm{\tau}_{e}=0$ on all the edges. Then combining Lemma \ref{interDOF} with the definitions of the face DoFs, we have $\bm{u}\times \bm{\nu}_{f}=\bm{0}, ~~\forall f\in \mathcal{F}$. Finally from the interior DoFs, we have $\bm{u}=0$.

This proves the unisolvence.
\end{proof}

At $   \mathcal{P}_{1, p+1}\Lambda^{2}(\mathcal{T}_{h}^{3})  $ the new complex branches into the standard finite element de Rham sequence: $   \mathcal{P}_{1, p+1}\Lambda^{2}(\mathcal{T}_{h}^{3})  $ is the BDM space with polynomial degree $p+1$, and $   \mathcal{P}_{1, p}\Lambda^{3}(\mathcal{T}_{h}^{3})  $ is the   space of piecewise polynomials of degree $p$.

\begin{lemma}
The 3D complex of $r=1$ \eqref{3dl1} is locally and globally exact on contractible domains.
\end{lemma}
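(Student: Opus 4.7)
The plan is to mirror the proof of the 2D complex above: verify the sequence is a complex, establish global exactness by combining surjectivity of the final map with a dimension count, and finally deduce local exactness by restricting to a single tetrahedron (or a contractible star). The complex condition follows because $\curl\grad$ and $\div\curl$ annihilate polynomials pointwise, while inter-element continuity is preserved at each step: the gradient of a Hermite function of degree $p+3$ is $C^0$ at vertices and tangentially continuous on edges and faces, which matches the definition of $\mathcal{P}_{1, p+2}\Lambda^1$; its curl is then normal-continuous across faces, so it lies in the BDM space $\mathcal{P}_{1, p+1}\Lambda^2$; and the divergence of a BDM field lands in the piecewise polynomial space $\mathcal{P}_{1, p}\Lambda^3$.

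Global exactness then reduces to two inputs. First, $\div\colon \mathcal{P}_{1, p+1}\Lambda^2 \to \mathcal{P}_{1, p}\Lambda^3$ is surjective because, as noted before the lemma, the source is the classical BDM space and the target is piecewise $\mathcal{P}_p$, for which the surjectivity of $\div$ is standard. Second, using the DoF enumeration of Lemma \ref{lem:unisol-2dr1} (and its analogues for the 3D Hermite and BDM elements), I would verify the identity
\[
\dim \mathcal{P}_{1, p+3}\Lambda^0 - \dim \mathcal{P}_{1, p+2}\Lambda^1 + \dim \mathcal{P}_{1, p+1}\Lambda^2 - \dim \mathcal{P}_{1, p}\Lambda^3 = 1,
\]
with each dimension expanded as a polynomial in $p$ weighted by $V,E,F,T$. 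Combining surjectivity with this identity and Euler's formula $V - E + F - T = 1$ for contractible domains forces exactness at each of the intermediate nodes.

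Local exactness on a single tetrahedron is immediate, since the local shape function spaces form the complete polynomial de Rham complex $\mathcal{P}_{p+3} \to \mathcal{P}_{p+2}^3 \to \mathcal{P}_{p+1}^3 \to \mathcal{P}_p$, which is exact modulo constants; for a star of a vertex or edge one simply applies the global argument to that contractible subdomain. The main obstacle I anticipate is the arithmetic in the central dimension identity—in particular, getting the new face contribution $(p+1)(p+3)F$ and interior contribution $\tfrac{1}{2} p(p+1)(p+3) T$ for $\mathcal{P}_{1, p+2}\Lambda^1$ to cancel cleanly against the standard Hermite and BDM counts so that the alternating sum collapses to the Euler term. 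Everything else (pointwise vanishing of $\curl\grad$ and $\div\curl$, preservation of continuity under differentiation, and BDM surjectivity) is classical.
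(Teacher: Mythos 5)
There is a genuine gap in the global-exactness step. Your plan supplies only two inputs: surjectivity of $\div$ at the last node and the alternating dimension identity. For the five-term complex
\[
0\rightarrow\mathbb{R}\rightarrow\mathcal{P}_{1, p+3}\Lambda^{0}\rightarrow\mathcal{P}_{1, p+2}\Lambda^{1}\rightarrow\mathcal{P}_{1, p+1}\Lambda^{2}\rightarrow\mathcal{P}_{1, p}\Lambda^{3}\rightarrow 0,
\]
the Euler-characteristic identity only tells you that the alternating sum of the cohomology dimensions vanishes. With exactness known at the two ends ($H^{0}=\mathbb{R}$ from connectedness and $H^{3}=0$ from BDM surjectivity), the dimension count yields $\dim H^{1}=\dim H^{2}$, not $H^{1}=H^{2}=0$. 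A single dimension identity cannot ``force exactness at each of the intermediate nodes'' when two intermediate nodes are unresolved; you must independently kill one of the two middle cohomology groups before the count closes the other.

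This is exactly the extra step the paper supplies: it proves exactness at $\mathcal{P}_{1, p+2}\Lambda^{1}$ directly by writing $\mathcal{P}_{1, p+2}\Lambda^{1}(\mathcal{T}_{h}^{3}) = \mathcal{P}_{p+2}\Lambda^{1}(\mathcal{T}_{h}^{3})\cap\{\bm{E}\in C^{0}(\mathcal{V})\}$ and intersecting the standard identity $\ker(\curl,\mathcal{P}_{p+2}\Lambda^{1})=\grad\,\mathcal{P}_{p+3}\Lambda^{0}$ with the vertex-continuity constraint: if $\grad u$ is $C^{0}$ at the vertices then $u$ is $C^{1}$ there, so the curl-free fields in the new space are precisely gradients of Hermite functions. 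Only after that does the dimension count (together with $\div$ surjectivity and the trivial exactness at $\Lambda^{0}$) deliver exactness at $\mathcal{P}_{1, p+1}\Lambda^{2}$. Your proposal should be amended to include this kernel identification; the rest of the outline (the complex property, BDM surjectivity, the local polynomial exactness, and the arithmetic of the dimension identity) matches the paper's argument.
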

\begin{proof}
The local exactness on an element only involves properties of local polynomials, which is well known. We only show the global exactness.

The exactness at $  \mathcal{P}_{1, p+3}\Lambda^{0}(\mathcal{T}_{h}^{3})$ is trivial because the kernel of the $\grad$ operator only consists of constant functions.  It is well known that 
$$
\div:   \mathcal{P}_{1, p+1}\Lambda^{2}(\mathcal{T}_{h}^{3}) \rightarrow   \mathcal{P}_{1, p}\Lambda^{3}(\mathcal{T}_{h}^{3})
$$
for the BDM-DG pair is onto.  This proves the exactness at $  \mathcal{P}_{1, p}\Lambda^{3}(\mathcal{T}_{h}^{3}) $.  Furthermore, we note that 
$$
  \mathcal{P}_{1, p+2}\Lambda^{1}(\mathcal{T}_{h}^{3}) = \mathcal{P}_{p+2}\Lambda^{1}(\mathcal{T}_{h}^{3})\cap \{\bm{E}: \bm{E}\in C^{0}(\mathcal{V})\}.
$$
From the standard results, we have
$$
\ker\left ({\curl},  \mathcal{P}_{p+2}\Lambda^{1}(\mathcal{T}_{h}^{3})\right )=\grad  \mathcal{P}_{p+3}\Lambda^{0}(\mathcal{T}_{h}^{3}),
$$
which implies that
\begin{align*}
\ker \left ({\curl}, \mathcal{P}_{1, p+2}\Lambda^{1}(\mathcal{T}_{h}^{3})\right ) &=\grad \left ( \mathcal{P}_{p+3}\Lambda^{0}(\mathcal{T}_{h}^{3}) \cap \{u: u\in C^{1}(\mathcal{V})\}   \right )\\
&=\grad \mathcal{P}_{1, p+3}\Lambda^{0}(\mathcal{T}_{h}^{3}),
\end{align*}
which shows the exactness at $\mathcal{P}_{1, p+2}\Lambda^{1}(\mathcal{T}_{h}^{3})$.

Then we only need to show the exactness at $\mathcal{P}_{1, p+1}\Lambda^{2}(\mathcal{T}_{h}^{3})$. After verifying the exactness at all the other spaces, we can check the dimensions to show the desired results.

The Hermite element of degree $p+3$, i.e. $\mathcal{P}_{1, p+3}\Lambda^{0}(\mathcal{T}_{h}^{3})$, has dimension $4V+pE+1/2(p+2)(p+1)F+1/6p(p+1)(p+2)T$, and the space $ \mathcal{P}_{1, p+2}\Lambda^{1}(\mathcal{T}_{h}^{3})$ has dimension $3V+(p+1)E+(p+1)(p+3)F+ 1/2p(p+1)(p+3)T$.
Furthermore,
\begin{align*}
\mathrm{dim}\left( \mathcal{P}_{p+1}\Lambda^{2}(t) \right)&=\mathrm{dim}\left (\mathcal{P}_{p+1}^{-}\Lambda^{0}  \right )F+\mathrm{dim}\left ( \mathcal{P}_{p}^{-}\Lambda^{1}  \right )T\\
&={p\choose 0}{p+3\choose 2}F+{p\choose 1}{p+3\choose 2}T\\
&= \frac{1}{2}(p+2)(p+3)F+\frac{1}{2}p(p+2)(p+3)T,
\end{align*}
and $\mathrm{dim}\left ( \mathcal{P}_{p}\Lambda^{3}(t)  \right )={p+3\choose 3}=1/6(p+3)(p+2)(p+1)T$.

Checking the dimensions and using Euler's formula, we have proved the exactness.
\end{proof}

\section{Argyris family: $r=2$}\label{sec:r2}

The $r=2$ family starts with elements with $C^{2}$ continuity at vertices, $C^{1}$ on edges and $C^{0}$ on faces. In 1D this leads to $H^{2} \rightarrow H^{1}$ pairs. In 2D, this gives $H^{2}\rightarrow H^{1} \rightarrow L^{2}$ conforming elements. In 3D we obtain conforming discretisations of $H^{1} \rightarrow H(\curl) \rightarrow H(\div) \rightarrow L^{2}$.

\subsection{Complexes in 1D and 2D}


In 1D the sequence consists of the Argyris-Hermite pair as the name suggests. For the lowest polynomial degree, we have the $\mathcal{P}_{5}$-$\mathcal{P}_{4}$ pair. We actually obtain a conforming finite element sub-complex of
  \begin{equation}
\begin{CD}
\mathbb{R}@>>>H^{2}(\Omega)@>\mathrm{grad}>>H^{1}(\Omega) @>{} >>  0.
\end{CD}
\end{equation}


The 2D sequence coincides with the Stokes complex given in Falk and Neilan \cite{falk2013stokes}. Because of the higher regularity at vertices and on edges, here the sequence with $d=2$, $r=2$ leads to a conforming discretisation of the Stokes complex
  \begin{equation}
\begin{CD}
\mathbb{R}@>>>H^{2}(\Omega)@>{\curl}>>  H^{1}(\Omega)^{2}@>\mathrm{div}>>  L^{2}(\Omega) @>{} >>  0.
\end{CD}
\end{equation}
Falk and Neilan \cite{falk2013stokes} chose the Argyris element for $H^{2}(\Omega)$, the Hermite element for each components of $H^{1}(\Omega)^{2}$ and the element with $C^{0}$ continuity at the vertices for $ L^{2}(\Omega)$. The inf-sup conditions and exactness were also shown in \cite{falk2013stokes}.

\subsection{Complex in 3D}

In 3D the ${H}(\curl)$ space is a  generalisation of the Hermite element by allowing jumps in the normal directions.
In what follows we construct the finite elements. The sequence of the lowest order elements is shown in Figure \ref{fig:3d}.
\begin{center}
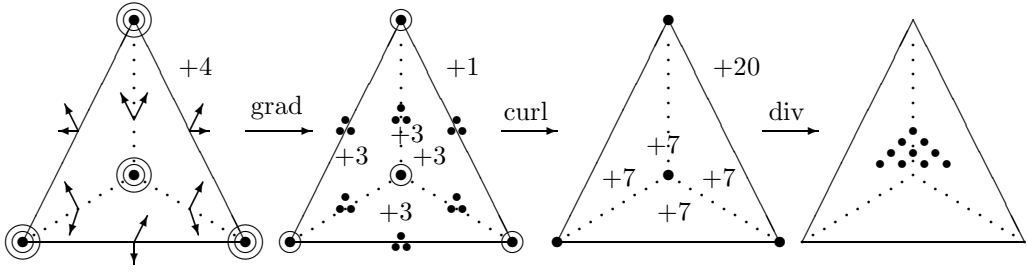
\begin{figure}
\setlength{\unitlength}{1.48mm}

\begin{picture}(20,20)(-11, 0)
\put(0,0){
\put(-10, 0){\line(1,2){10}} 
\put(0, 20){\line(1,-2){10}}
\put(-10,0){\line(1,0){20}}
\multiput(-10,0)(1,0.6){10}{\circle*{0.01}}
\multiput(10,0)(-1,0.6){10}{\circle*{0.01}}
\multiput(0,6)(0,1.3){10}{\circle*{0.01}}

\put(-10.,0){\circle*{1}}
\put(-10.,0){\circle{2}}
\put(-10.,0){\circle{3}}
\put(10.,0){\circle*{1}}
\put(10.,0){\circle{2}}
\put(10.,0){\circle{3}}
\put(0,20){\circle*{1}}
\put(0,20){\circle{2}}
\put(0,20){\circle{3}}
\put(0, 6){\circle*{1}}
\put(0, 6){\circle{2}}
\put(0, 6){\circle{3}}

\put(4,15){+4}
\put(5,10){\vector(1,2){1.2}}
\put(5,10){\vector(2,0){1.8}}
\put(-5,10){\vector(-1,2){1.2}}
\put(-5,10){\vector(-2,0){1.8}}
\put(0,11){\vector(1,2){1.2}}
\put(0,11){\vector(-1,2){1.2}}
\put(5,3){\vector(1,2){1.2}}
\put(5,3){\vector(1,-3){0.8}}
\put(-5,3){\vector(-1,2){1.2}}
\put(-5,3){\vector(-1,-3){0.8}}
\put(0,0){\vector(1,2){1.2}}
\put(0,0){\vector(0,-1){2}}
}
\put(10, 10){\vector(1, 0){6}}
\put(10.5, 11.5){$\grad$}


\put(24,0){
\put(-10, 0){\line(1,2){10}} 
\put(0, 20){\line(1,-2){10}}
\put(-10,0){\line(1,0){20}}
\multiput(-10,0)(1,0.6){10}{\circle*{0.01}}
\multiput(10,0)(-1,0.6){10}{\circle*{0.01}}
\multiput(0,6)(0,1.3){10}{\circle*{0.01}}

\put(-10.,0){\circle*{1}}
\put(-10.,0){\circle{2}}
\put(10.,0){\circle*{1}}
\put(10.,0){\circle{2}}
\put(0,20){\circle*{1}}
\put(0,20){\circle{2}}
\put(0, 6){\circle*{1}}
\put(0, 6){\circle{2}}

\put(5,11){\circle*{0.8}}
\put(4.5,10){\circle*{0.8}}
\put(5.5,10){\circle*{0.8}}
\put(-5,11){\circle*{0.8}}
\put(-4.5,10){\circle*{0.8}}
\put(-5.5,10){\circle*{0.8}}
\put(0,12){\circle*{0.8}}
\put(-0.5,11){\circle*{0.8}}
\put(0.5,11){\circle*{0.8}}

\put(5,4){\circle*{0.8}}
\put(4.5,3){\circle*{0.8}}
\put(5.5,3){\circle*{0.8}}
\put(-5,4){\circle*{0.8}}
\put(-4.5,3){\circle*{0.8}}
\put(-5.5,3){\circle*{0.8}}

\put(0,0.5){\circle*{0.8}}
\put(-0.5,-0.5){\circle*{0.8}}
\put(0.5,-0.5){\circle*{0.8}}

\put(-6, 7){+3}
\put(1, 7){+3}
\put(-2, 2){+3}
\put(-1, 9){+3}

\put(4, 15){+1}
}

\put(33, 10){\vector(1, 0){5}}
\put(33.2, 11){$\curl$}

\put(48,0){
\put(-10, 0){\line(1,2){10}} 
\put(0, 20){\line(1,-2){10}}
\put(-10,0){\line(1,0){20}}
\multiput(-10,0)(1,0.6){10}{\circle*{0.01}}
\multiput(10,0)(-1,0.6){10}{\circle*{0.01}}
\multiput(0,6)(0,1.3){10}{\circle*{0.01}}

\put(-10.,0){\circle*{1}}
\put(10.,0){\circle*{1}}
\put(0,20){\circle*{1}}
\put(0, 6){\circle*{1}}

\put(-6, 5){+7}
\put(3, 5){+7}
\put(-2, 8){+7}
\put(-1, 2){+7}
\put(4, 15){+20}
}
\put(56.5, 10){\vector(1, 0){5}}
\put(57, 11){$\div$}

\put(70,0){
\put(-10, 0){\line(1,2){10}} 
\put(0, 20){\line(1,-2){10}}
\put(-10,0){\line(1,0){20}}
\multiput(-10,0)(1,0.6){10}{\circle*{0.01}}
\multiput(10,0)(-1,0.6){10}{\circle*{0.01}}
\multiput(0,6)(0,1.3){10}{\circle*{0.01}}

\put(0,10){\circle*{0.8}}
\put(-1,9){\circle*{0.8}}
\put(1,9){\circle*{0.8}}
\put(-2,8){\circle*{0.8}}
\put(0,8){\circle*{0.8}}
\put(2,8){\circle*{0.8}}
\put(-3,7){\circle*{0.8}}
\put(-1, 7){\circle*{0.8}}
\put(3,7){\circle*{0.8}}
\put(1, 7){\circle*{0.8}}
}
\end{picture}
\caption{3D $r=2$ finite element sequence with lowest polynomial degrees $\mathcal{P}_{5}\rightarrow (\mathcal{P}_{4})^{3} \rightarrow (\mathcal{P}_{3})^{3} \rightarrow \mathcal{P}_{2}$. Interior DoFs (except for the last space) are shown as ``+4, +1, +20''.  For the $H(\curl)$ and the $H(\div)$ elements, the DoFs of the three components at each vertex are shown by one circle.
}
\label{fig:3d}
\end{figure}
\end{center} 

\newcase{Element $\mathcal{P}_{2, p}\Lambda^{0}(\mathcal{T}_{h}^{3}) \subset H^{1}(\Omega)$.}
We use $\mathcal{P}_{2, p}\Lambda^{0}(\mathcal{T}_{h}^{3})$ to denote the  finite element subspace of $H^{1}(\Omega)$ consisting of polynomials of degree $p$, which coincides with each component of  the velocity space in the 3D Stokes complex of Neilan \cite{Neilan2015}.
For  $\mathcal{P}_{2, p}\Lambda^{0}(\mathcal{T}_{h}^{3})$ we  impose $C^{2}$ continuity at vertices, $C^{1}$ on edges and $C^{0}$ on faces, i.e. 
$$
\mathcal{P}_{2, p}\Lambda^{0}(\mathcal{T}_{h}^{3})=\{ s\in H^{1}(\Omega): s|_{t}\in \mathcal{P}_p (t), \forall t\in \mathcal{T}; s\in C^{2}(\V), s\in C^{1}(\E)\}.
$$
Furthermore, the restriction of $\mathcal{P}_{2, p}\Lambda^{0}(\mathcal{T}_{h}^{3})$ to a face is a 2D Argyris element.

The DoFs are given in (3.2) of \cite{Neilan2015}. The dimensions can be counted as (for $p \geq 5$):
\begin{itemize}
\item one function value and (nine) derivatives up to second order at each vertex,
\item
$2(p-4)$ normal derivatives and $p-5$ function values on each edge, 
\item
${p-4 \choose 2}$ DoFs on each face, 
\item
$ {p-1 \choose 3} $ interior DoFs. 
\end{itemize}

The proof of the unisolvence can also be found in \cite{Neilan2015}.
\begin{lemma}
The DoFs for $\mathcal{P}_{2, p}\Lambda^{0}(\mathcal{T}_{h}^{3})$ are locally unisolvent, and $\mathcal{P}_{2, p}\Lambda^{0}(\mathcal{T}_{h}^{3})\subset H^{1}(\Omega)$.
\end{lemma}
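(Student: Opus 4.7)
The plan is to verify first that the DoF count on a single tetrahedron matches $\dim \mathcal{P}_p(t) = \binom{p+3}{3}$, then to prove unisolvence by peeling off contributions from vertices, edges, faces and the interior in that order, using standard dimension-matched bubble arguments; the $H^1$-conformity follows automatically from the $C^0$ continuity across faces. For the dimension, on one tetrahedron $V=4$, $E=6$, $F=4$, $T=1$, so the proposed DoFs total
\[
4 \cdot 10 \;+\; 6\bigl(2(p-4) + (p-5)\bigr) \;+\; 4\binom{p-4}{2} \;+\; \binom{p-1}{3},
\]
which a direct expansion reduces to $\binom{p+3}{3}$.

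For unisolvence, suppose $s \in \mathcal{P}_p(t)$ has all DoFs vanishing. The vertex data force $s$ together with all of its first and second partial derivatives to vanish at each of the four vertices. Restricting to an edge $e=[a,b]$ with tangent $\bm{\tau}_e$, the univariate polynomial $s|_e$ has degree $p$ with value, first and second tangential derivatives all vanishing at both $a$ and $b$; together with the $p-5$ interior function-value moments on $e$, the $p+1$ coefficients are determined and $s|_e=0$. For each of the two normal directions $\bm{\nu}_{e,i}$, the restriction $(\partial_{\bm{\nu}_{e,i}} s)|_e$ is a polynomial of degree $p-1$ whose value and first tangential derivative vanish at both endpoints (extracted from the $C^2$ data at the vertices), so the remaining $p-4$ coefficients are pinned down by the prescribed moments for each $i$. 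Hence $s$ and $\grad s$ vanish on the entire $1$-skeleton. On each face $f$ with barycentric coordinates $\lambda_1,\lambda_2,\lambda_3$, the trace $s|_f$ then vanishes together with its $f$-normal derivative along $\partial f$, so it is divisible by $(\lambda_1\lambda_2\lambda_3)^2$ and can be written $s|_f=(\lambda_1\lambda_2\lambda_3)^2 q$ with $q\in \mathcal{P}_{p-6}(f)$, a space of dimension exactly $\binom{p-4}{2}$; the face DoFs force $q=0$. Finally $s=\lambda_1\lambda_2\lambda_3\lambda_4 Q$ with $Q\in \mathcal{P}_{p-4}(t)$, a space of dimension $\binom{p-1}{3}$, which is killed by the interior DoFs.

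The $H^1(\Omega)$-inclusion is then immediate, since $C^0$ continuity across faces is built into the definition of the space. The main obstacle will be the bookkeeping at edges: in 3D each edge carries a two-dimensional normal space, and one has to extract from the $C^2$ vertex data two independent pointwise conditions (the value and the first tangential derivative) on each $(\partial_{\bm{\nu}_{e,i}} s)|_e$ so that the moment DoFs exactly determine these polynomials. Once this accounting is in place the face and interior steps are routine bubble-function arguments along the lines of \cite{Neilan2015}.
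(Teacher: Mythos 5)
Your proof is correct: the local dimension count reduces to $\binom{p+3}{3}$ as claimed, and the vertex--edge--face--interior peeling (vanishing to order $3$ and $2$ at endpoints on edges for the trace and the two edge-normal derivatives respectively, then divisibility by $(\lambda_1\lambda_2\lambda_3)^2$ on faces and by the volume bubble in the interior) is exactly the standard unisolvence argument for this element. The paper itself offers no proof of this lemma --- it defers both the DoFs and the unisolvence to \cite{Neilan2015} --- and your argument is essentially the one given there, so you have supplied the omitted details rather than taken a different route; the only point I would sharpen is that $H^1$-conformity should be justified by noting that the trace on a shared face is fully determined by the DoFs on that face's closure (the restriction being a 2D Argyris element, as the paper remarks), rather than by appeal to the definition alone.
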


We count the dimensions of the global finite element space:
$$
{\dim}(\mathcal{P}_{2, p}\Lambda^{0}(\mathcal{T}_{h}^{3}))=10V+\left [ 2(p-4)+(p-5)  \right ]E+{p-4 \choose 2}  F+{p-1\choose 3} T.
$$

\newcase{Element $\mathcal{P}_{2, p}\Lambda^{1}(\mathcal{T}_{h}^{3})\subset H(\curl)$.}
The $H(\curl)$ finite element space $\mathcal{P}_{2, p}\Lambda^{1}(\mathcal{T}_{h}^{3})$ is partly motivated by the Hu-Zhang elements \cite{Hu2015} for the Hellinger-Reissner variational principle of linear elasticity, which are modifications of the nodal Lagrange elements. Here we modify the Hermite elements to give a discretisation of $\mathcal{P}_{2, p}\Lambda^{1}(\mathcal{T}_{h}^{3})$.

We can describe $\mathcal{P}_{2, p}\Lambda^{1}(\mathcal{T}_{h}^{3})$ by the local shape function space and the interelement continuity:
$$
\mathcal{P}_{2, p}\Lambda^{1}(\mathcal{T}_{h}^{3})=\{ \bm{w}\in H(\curl; \Omega): \forall t\in \T, \bm{w}|_{t}\in \mathcal{P}_p \Lambda^{1}(t); \bm{w}\in C^{1}(\V), \bm{w}\in C^{0}(\E)\}.
$$
The local DoFs can be given as:
\begin{itemize}
\item
function value and first order derivatives of each component at each vertex $\bm{x}\in \mathcal{V}$:
$$
\bm{E}_{i}(\bm{x}), \quad \partial_{j}\bm{E}_{i}(\bm{x}), \quad i, j=1, 2, 3,
$$
\item
$p-3$ moments  for each component  on each edge $e\in \mathcal{E}$:
$$
\int_{e} \bm{E}_{i}{q}, \quad {q}\in \mathcal{P}_{p-4}(e), ~i=1,2, 3,
$$
\item
moments of tangential components  on each face $f$:
$$
\int_{f}\left (\bm{E}\times \bm{\nu}_{f}\right )\cdot \bm{q}, \quad \bm{q}\in \left (\mathcal{P}_{p-3}(f)\right )^{2},
$$
where $\bm{E}\times \bm{\nu}_{f}$ is considered as a 2D vector on $f$,
\item
interior DoFs on $t\in \mathcal{T}$:
\begin{align}\label{curl-bubble-r2}
\int_{t}\bm{E}\cdot \bm{v}, \quad \bm{v}\in \mathcal{P}^{-}_{p-2}\Lambda^{2}(t).
\end{align}
\end{itemize}

The dimension of the bubble space on $t\in \mathcal{T}$ is $1/2(p^{3}-2p^{2}-p+2)$.

We can immediately check the local unisolvence:
\begin{lemma}
The DoFs for $\mathcal{P}_{2, p}\Lambda^{1}(\mathcal{T}_{h}^{3})$  are unisolvent. 
\end{lemma}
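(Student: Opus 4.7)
The plan is to follow the standard two-step unisolvence template: first verify that the total count of local DoFs on a tetrahedron $t$ matches $\dim(\mathcal{P}_p(t))^3 = 3\binom{p+3}{3}$, then show that vanishing of all DoFs on some $\bm{E}\in \mathcal{P}_p(t)^3$ forces $\bm{E}\equiv 0$. For the count on a single element (with $V=4,E=6,F=4$), the vertex conditions contribute $12\cdot 4=48$ (value and three first derivatives per component at each vertex), the edge moments contribute $3(p-3)\cdot 6$, the face moments contribute $(p-1)(p-2)\cdot 4$ since $\dim(\mathcal{P}_{p-3}(f))^2 = 2\binom{p-1}{2}$, and the interior moments contribute $\tfrac{1}{2}(p-1)(p-2)(p+1)$ by Lemma \ref{lem:dimPminus}. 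Summing and simplifying matches $\tfrac{1}{2}(p+1)(p+2)(p+3)$; this can be checked at, e.g., $p=3$ (where one obtains $48+0+8+4=60=3\cdot 20$) and confirmed by matching leading coefficients in $p$.

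For the vanishing step I would argue outward from low-dimensional simplices. On any edge $e$ with tangent $\bm{\tau}_e$, each scalar component $\bm{E}_i|_e$ is a univariate polynomial of degree $p$. The vertex DoFs supply the value and the tangential derivative at both endpoints (four conditions), and the $p-3$ edge moments against $\mathcal{P}_{p-4}(e)$ supply the remaining $p-3$ conditions needed to determine $\bm{E}_i|_e$; hence $\bm{E}|_e\equiv 0$ on every edge.

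Next, on each face $f$ with barycentric coordinates $\lambda_1,\lambda_2,\lambda_3$, the tangential trace $\bm{E}\times\bm{\nu}_f$, viewed as a pair of scalar polynomials of degree $\leq p$ on $f$, vanishes on $\partial f$ by the previous step and therefore each component is divisible by the cubic bubble, yielding $\bm{E}\times\bm{\nu}_f=\lambda_1\lambda_2\lambda_3\,\bm{p}$ for some $\bm{p}\in(\mathcal{P}_{p-3}(f))^2$. Choosing $\bm{q}=\bm{p}$ in the face DoFs gives $\int_f \lambda_1\lambda_2\lambda_3\,|\bm{p}|^2 = 0$, and positivity of $\lambda_1\lambda_2\lambda_3$ on the interior of $f$ forces $\bm{p}=0$. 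Hence $\bm{E}\times\bm{\nu}_f=0$ on every face, so $\bm{E}$ has vanishing tangential trace on $\partial t$, i.e., $\bm{E}\in\mathring{\mathcal{P}}_p\Lambda^1(t)$. Finally, Lemma \ref{interDOF} applied with $n=3$, $k=1$ to the interior moments against $\mathcal{P}_{p-2}^{-}\Lambda^2(t)$ concludes $\bm{E}\equiv 0$.

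The main obstacle I anticipate is recognizing why the face test space $(\mathcal{P}_{p-3}(f))^2$ is exactly the one that balances the cubic-bubble factorization. Because the higher vertex regularity ($C^1$ at vertices) is already baked into the vertex DoFs, the edge and face conditions must be correspondingly lighter than in the classical BDM/N\'{e}d\'{e}lec ($r=0$) case; the appearance of $\mathcal{P}_{p-3}(f)$ rather than the $\mathcal{P}_{p-2}^{-}\Lambda^1(f)$-type test forms familiar from $r=0$ is precisely what is needed so that the factorization argument terminates, and verifying this bookkeeping is the only genuinely non-routine part. Once it is in place, the rest is a clean application of the standard bubble-factorization trick together with Lemma \ref{interDOF}.
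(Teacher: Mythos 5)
Your proof is correct and follows essentially the same route as the paper's: match the local DoF count against $\dim\left(\mathcal{P}_p(t)^{3}\right)$, then deduce $\bm{E}=0$ successively on edges, on faces (via the cubic-bubble factorization of $\bm{E}\times\bm{\nu}_f$), and in the interior via Lemma \ref{interDOF}; you merely spell out the factorization steps that the paper leaves implicit. One point in your favour: your face contribution $2\binom{p-1}{2}=(p-1)(p-2)$ per face is the correct dimension of $\left(\mathcal{P}_{p-3}(f)\right)^{2}$, whereas the paper's displayed tally writes $2\binom{p-2}{2}$, an off-by-one slip under which the stated identity would not actually hold.
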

\begin{proof}
It is straightforward to check the local dimension of the DoFs on an element $t$:
$$
12\times 4+3(p-3)\times 6+2{p-2 \choose 2} \times 4+\left (\frac{1}{2}p^{3}-p^{2}-\frac{1}{2}p+1\right)\times 1=\mathrm{dim}\left (\mathcal{P}_p (t)^{3}\right ).
$$

Now it suffices to show that if all the DoFs vanish,  we have $\bm{E}=0$ on the element $t$. Actually, from the DoFs attached to 调 the vertices and edges, we know that $\bm{E}$ vanishes on all the edges.  By the DoFs on faces, the tangential components of $\bm{E}$ vanish  on all the faces, therefore $\bm{E}\in {\0{\mathcal{P}}}_p \Lambda^{1}(t)$.  Finally, from the interior DoFs \eqref{curl-bubble-r2} and Lemma \ref{interDOF}, Lemma \ref{lem:dimPminus}, we have $\bm{E}=\bm{0}$, which shows the unisolvence.
\end{proof}

The dimension of $\mathcal{P}_{2, p}\Lambda^{1}(\mathcal{T}_{h}^{3})$ is 
$$
\mathrm{dim}(\mathcal{P}_{2, p}\Lambda^{1}(\mathcal{T}_{h}^{3}))=12V+3(p-3)E+2{p-2 \choose 2} F+\left(\frac{1}{2}p^{3}-p^{2}-\frac{1}{2}p+1\right)T.
$$

We characterise $H(\curl)$ bubbles in a more constructive way, which resembles the discussions for the symmetric matrix valued $H(\div, \mathbb{S})$ bubble function in \cite{hu2015family,Hu2015}.

We define the ${H}(\curl)$ bubble space on a 3D cell $t$:
$$
\Sigma_{t, p}^{c}:=\sum_{i=0}^{3} \mathcal{P}_{p-3}(t) \lambda_{j}\lambda_{l}\lambda_{m}\bm{\nu}_{i},
$$
where   $i, j, l, m$ are the four different indices from $0$ to $3$, $\lambda_{j}$ is the $j$-th barycentric coordinate and $\bm{\nu}_{i}$ is the normal vector of the face opposite to vertex $i$.

We recall that ${\0{\mathcal{P}}}_p \Lambda^{1}(t)$ is the N\'{e}d\'{e}lec element of the second kind of degree $p$ with vanishing tangential components on $\partial t$.
\begin{lemma}
We have $\Sigma_{t, p}^{c}={\0{\mathcal{P}}}_p \Lambda^{1}(t),\quad\forall t\in \mathcal{T}$ .
\end{lemma}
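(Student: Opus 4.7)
The plan is to verify the inclusion $\Sigma_{t,p}^c \subset {\0{\mathcal{P}}}_p\Lambda^1(t)$ directly and then match dimensions. Write $\mu_i := \prod_{k \ne i}\lambda_k$, so the generators of $\Sigma_{t,p}^c$ are of the form $q\,\mu_i\,\bm{\nu}_i$ with $q \in \mathcal{P}_{p-3}(t)$ and $i \in \{0,1,2,3\}$.

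For the easy direction I would check that each generator $q\,\mu_i\,\bm{\nu}_i$ has vanishing tangential trace on every face $F_k$ of $t$. If $k \ne i$, the factor $\lambda_k$ occurs in $\mu_i$, so $\mu_i$ vanishes on $F_k$ and the whole vector field vanishes identically there; if $k = i$, then $\bm{\nu}_i$ is normal to $F_i$ and its tangential component is zero.

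For the reverse inclusion I plan a dimension count. Standard DoF bookkeeping for the second kind N\'ed\'elec bubbles (already invoked in the preceding analysis of $\mathcal{P}_{2,p}\Lambda^1(\mathcal{T}_h^3)$) gives $\dim {\0{\mathcal{P}}}_p\Lambda^1(t) = \tfrac{1}{2}(p-1)(p-2)(p+1)$. On the other hand, the natural generating set for $\Sigma_{t,p}^c$ has size $4\binom{p}{3}$, and the bulk of the argument is to identify the space of linear relations among these generators. The key identity is $\sum_{i=0}^3 \grad\lambda_i = 0$, equivalently $\sum_{i=0}^3 h_i^{-1}\bm{\nu}_i = 0$, where $h_i$ is the distance from vertex $i$ to the opposite face $F_i$. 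Since the four constant vectors $\{h_i^{-1}\bm{\nu}_i\}$ span $\mathbb{R}^3$ with one-dimensional kernel spanned by $(1,1,1,1)$, a pointwise identity $\sum_i q_i\mu_i\bm{\nu}_i = 0$ forces the four polynomials $h_i q_i \mu_i$ to coincide with a single polynomial $g$. Requiring each $q_i = g/(h_i\mu_i)$ to lie in $\mathcal{P}_{p-3}(t)$ then forces $g$ to be divisible by every $\mu_i$, hence (by pairwise coprimality of the distinct affine factors $\lambda_0,\lambda_1,\lambda_2,\lambda_3$) by the product $\lambda_0\lambda_1\lambda_2\lambda_3$. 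Writing $g = \lambda_0\lambda_1\lambda_2\lambda_3\,h$ with $h \in \mathcal{P}_{p-4}(t)$ recovers $q_i = h\lambda_i/h_i$, so the space of relations has dimension $\binom{p-1}{3}$. Therefore
\[
\dim \Sigma_{t,p}^c \;=\; 4\binom{p}{3} - \binom{p-1}{3} \;=\; \tfrac{1}{2}(p-1)(p-2)(p+1) \;=\; \dim {\0{\mathcal{P}}}_p\Lambda^1(t),
\]
which combined with the first inclusion yields equality.

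The main obstacle I anticipate is pinning down the relation space: one must argue that the only source of redundancy among the four families $\mathcal{P}_{p-3}(t)\mu_i\bm{\nu}_i$ comes from the single constant-coefficient identity $\sum_i h_i^{-1}\bm{\nu}_i = 0$, and then translate this pointwise linear-algebra statement into a divisibility statement in the polynomial ring that is consistent with the degree constraint $q_i \in \mathcal{P}_{p-3}(t)$.
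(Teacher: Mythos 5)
Your proposal is correct, but it reaches the conclusion by a genuinely different route. The paper proves the nontrivial inclusion ${\0{\mathcal{P}}}_p \Lambda^{1}(t)\subset \Sigma_{t, p}^{c}$ directly: it writes an arbitrary bubble as $\sum_{i}p_{i}\bm{\nu}_{i}$ and uses the vanishing tangential trace on each face $f_{j}$ (via the cross products $\bm{\nu}_{i}\times\bm{\nu}_{j}$, which are edge directions) to argue that each $p_{i}$ acquires the factor $\lambda_{j}\lambda_{l}\lambda_{m}$. You instead verify only the easy inclusion $\Sigma_{t,p}^{c}\subset{\0{\mathcal{P}}}_p \Lambda^{1}(t)$ and close the argument by a dimension count, which forces you to compute the relation module among the generators $\mathcal{P}_{p-3}(t)\,\mu_{i}\bm{\nu}_{i}$. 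That computation is sound: since any three of the four face normals of a nondegenerate tetrahedron are linearly independent, the kernel of $(c_{i})\mapsto\sum_{i}c_{i}h_{i}^{-1}\bm{\nu}_{i}$ is one-dimensional and spanned by $(1,1,1,1)$, so a pointwise relation forces the four polynomials $h_{i}q_{i}\mu_{i}$ to coincide with a single $g$, the pairwise coprimality of the $\lambda_{k}$ then gives $\lambda_{0}\lambda_{1}\lambda_{2}\lambda_{3}\mid g$, and the degree bookkeeping yields a relation space of dimension $\binom{p-1}{3}$; the arithmetic $4\binom{p}{3}-\binom{p-1}{3}=\tfrac12(p-1)(p-2)(p+1)$ indeed matches the standard count of the second-kind N\'{e}d\'{e}lec bubbles that the paper itself records. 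What your approach buys is that it avoids the delicate step in the paper's argument, namely that the representation $\sum_{i}p_{i}\bm{\nu}_{i}$ is non-unique (the coefficients are only determined modulo the very syzygy you identify) and that the three edge vectors appearing in the face-trace identity are coplanar rather than a basis of $\mathbb{R}^{3}$, so concluding that each $p_{i}$ vanishes on $f_{j}$ requires more care than the paper lets on; the price you pay is reliance on the known dimension of ${\0{\mathcal{P}}}_p \Lambda^{1}(t)$, which is however already invoked elsewhere in the paper. The only cosmetic caveat is the orientation convention: the identity $\sum_{i}h_{i}^{-1}\bm{\nu}_{i}=0$ requires consistently oriented (say all outward) unit normals, but any other choice only changes signs in the kernel generator, all of whose entries remain nonzero, so the count is unaffected.
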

\begin{proof}
It is obvious that $\Sigma_{t, p}^{c}\subset {\0{\mathcal{P}}}_p \Lambda^{1}(t)$. To show the converse, we assume $\bm{E}\in {\0{\mathcal{P}}}_p \Lambda^{1}(T)$. Then  from the definition of ${\0{\mathcal{P}}}_p \Lambda^{1}(T)$, $\bm{E}$ vanishes at the vertices  and has the representation:
$$
\bm{E}=\sum_{i=0}^{3}p_{i}\bm{\nu}_{i}, \quad p_{i}\in \mathcal{P}_p (t).
$$
The representation is not unique since there are four normals on a tetrahedron which are not linearly independent. 
We are to prove that $p_{i}$ contains a factor $\lambda_{j}\lambda_{l}\lambda_{m}$, where $i, j, m$ and $l$ are the four different indices chosen from $0, 1, 2, 3$.

On face $f_{j}$, we have
$$
\bm{0}=\bm{E}\times \bm{\nu}_{j}=\sum_{i=0}^{3}p_{i}\bm{\nu}_{i}\times \bm{\nu}_{j}=\sum_{i\neq j} p_{i} \frac{\bm{e}_{lm}}{|\bm{e}_{lm}|},
$$
where $\bm{e}_{lm}$ is the edge connecting vertex $l$ and vertex $m$.

Fixing $i$, we have three options for $j$, so there are three options for $\bm{e}_{lm}$ which are linearly independent and form a basis of $\mathbb{R}^{3}$.  Therefore $p_{i}$ vanishes on $f_{j}$,  $i\neq j$. This implies that $p_{i}$  contains a factor $\lambda_{j}\lambda_{l}\lambda_{m}$.

This proves ${\0{\mathcal{P}}}_p \Lambda^{1}(t)\subset \Sigma_{t, p}^{c}$ and hence $\Sigma_{t, p}^{c}={\0{\mathcal{P}}}_p \Lambda^{1}(t)$.
\end{proof}

We have the following space decomposition which shows that $\mathcal{P}_{2, p}\Lambda^{1}(\mathcal{T}_{h}^{3})$ can be written as the sum of a continuous Hermite element space and local bubble functions: 
\begin{lemma}\label{lem:decomposition-21}
We have $\mathcal{P}_{2, p}\Lambda^{1}(\mathcal{T}_{h}^{3})=\bm{S}_{h}^{p}+\Sigma_{p}^{c}$, where $\bm{S}_{h}^{p}$ is the vector Hermite space, and the restriction of $\Sigma_{p}^{c}$ on an element $t$ coincides with $\Sigma_{t, p}^{c}$.
\end{lemma}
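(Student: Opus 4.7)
The plan is to mirror the proof strategy already used for the 2D decomposition lemma earlier in the paper: establish the inclusion $\bm{S}_h^p+\Sigma_p^c\subset \mathcal{P}_{2,p}\Lambda^1(\mathcal{T}_h^3)$ directly from the continuity definitions, and then, for an arbitrary $\bm{u}_h\in \mathcal{P}_{2,p}\Lambda^1(\mathcal{T}_h^3)$, construct a vector Hermite function $\tilde{\bm{u}}_h\in \bm{S}_h^p$ prescribed by matching appropriate DoFs so that the residual $\bm{u}_h-\tilde{\bm{u}}_h$ has vanishing tangential trace on $\partial t$ for every $t$ and therefore lies in $\Sigma_p^c$ by the preceding characterization $\Sigma_{t,p}^c={\0{\mathcal{P}}}_p\Lambda^1(t)$.

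For the easy direction I would argue as follows. Vector Hermite functions in $\bm{S}_h^p$ are $C^0$ globally and $C^1$ at vertices, which is strictly stronger than what $\mathcal{P}_{2,p}\Lambda^1(\mathcal{T}_h^3)$ demands (tangential continuity across faces, $C^0$ on edges, $C^1$ at vertices), so $\bm{S}_h^p\subset \mathcal{P}_{2,p}\Lambda^1(\mathcal{T}_h^3)$. For $\Sigma_p^c$, the previous lemma identifies its elementwise realization with ${\0{\mathcal{P}}}_p\Lambda^1(t)$, whose members have vanishing tangential trace on $\partial t$; extension by zero across faces therefore preserves tangential continuity (all tangential traces are $0$) and trivially satisfies the vertex and edge regularity requirements.

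For the nontrivial inclusion, given $\bm{u}_h\in \mathcal{P}_{2,p}\Lambda^1(\mathcal{T}_h^3)$ I would define $\tilde{\bm{u}}_h\in \bm{S}_h^p$ by prescribing its three-copy Hermite DoFs: copy the vertex values and first-order derivatives of $\bm{u}_h$ (well defined since $\bm{u}_h\in C^1(\V)$), copy the edge moments of $\bm{u}_h$, use the face DoFs of $\bm{u}_h$, which are of tangential type $\int_f(\bm{u}_h\times\bm{\nu}_f)\cdot\bm{q}$, to set the tangential components of the vector Hermite face DoFs and set the normal-component face DoFs to $0$, and finally set all interior Hermite DoFs to $0$. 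By the unisolvence of the vector Hermite element, $\tilde{\bm{u}}_h$ is well defined and globally conforming. The difference $\bm{w}_h:=\bm{u}_h-\tilde{\bm{u}}_h$ then has vanishing vertex and edge DoFs, hence $\bm{w}_h|_e=\bm{0}$ for every $e\in\E$, and vanishing face tangential DoFs, hence $\bm{w}_h\times\bm{\nu}_f=\bm{0}$ on every $f\in\mathcal{F}$ by the face unisolvence in Lemma~\ref{lem:unisol-2dr1} applied in each 2D face (or by the same argument as in its proof). Therefore $\bm{w}_h|_t\in {\0{\mathcal{P}}}_p\Lambda^1(t)=\Sigma_{t,p}^c$ on every $t$, which places $\bm{w}_h\in\Sigma_p^c$.

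The main obstacle is the bookkeeping at the face level, i.e.\ matching the \emph{tangential} face functionals of $\mathcal{P}_{2,p}\Lambda^1$ with a well chosen subfamily of the vector Hermite face DoFs so that the resulting interpolant $\tilde{\bm{u}}_h$ is still conforming in $\bm{S}_h^p$ and so that $\bm{w}_h$ genuinely has zero tangential trace (not merely zero moments). This is the same subtlety that appears in the Hu--Zhang decomposition and in the 2D analogue already proved, and it is handled by viewing the Hermite face DoFs in a basis aligned with $\{\bm{\tau}_{f,1},\bm{\tau}_{f,2},\bm{\nu}_f\}$, so that the tangential block is independently prescribable; the dimension count of $\mathcal{P}_{2,p}\Lambda^1(\mathcal{T}_h^3)$ versus $\bm{S}_h^p+\Sigma_p^c$ then confirms the decomposition.
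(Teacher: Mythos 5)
Your proposal is correct and follows essentially the same route as the paper: the easy inclusion via the continuity of vector Hermite functions and the zero-extension of the local bubbles $\Sigma_{t,p}^{c}={\0{\mathcal{P}}}_{p}\Lambda^{1}(t)$, and the converse via the canonical interpolation onto $\bm{S}_{h}^{p}$ that copies the vertex, edge and tangential face data of $\bm{u}_{h}$ and sets the normal face data to zero, so that the residual is a local bubble. The only cosmetic difference is your closing appeal to a dimension count, which is not needed since the sum is not claimed to be direct; the paper stops once the residual is identified with an element of $\Sigma_{t,p}^{c}$ on each $t$.
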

\begin{proof}
First we prove $\bm{S}_{h}^{p}+\Sigma_{p}^{c}\subset \mathcal{P}_{2, p}\Lambda^{1}(\mathcal{T}_{h}^{3})$. In fact, the local polynomials of $\bm{S}_{h}^{p}+\Sigma_{p}^{c}$ and $\mathcal{P}_{2, p}\Lambda^{1}(\mathcal{T}_{h}^{3})$ are the same ($\mathcal{P}_p$).  Furthermore, it is obvious that $\bm{S}_{h}^{p}$ satisfies the interelement continuity imposed by the DoFs of $\mathcal{P}_{2, p}\Lambda^{1}(\mathcal{T}_{h}^{3})$. Now we show that the interelement continuity of the extension by zero of $\Sigma_{t, p}^{c}$ also satisfies the continuity of $\mathcal{P}_{2, p}\Lambda^{1}(\mathcal{T}_{h}^{3})$.

In fact, the ${H}({\curl})$ bubbles  $p_{i}\lambda_{j}\lambda_{l}\lambda_{m}\bm{\nu}_{i}$ vanish on all the edges and the derivatives $\grad(p_{i}\lambda_{j}\lambda_{l}\lambda_{m}\bm{\nu}_{i})$ contain at least two of the barycentric coordinates, which also vanish at all the vertices. This shows that the bubble functions satisfy the interelement continuity.

It remains to show the converse, i.e. $\mathcal{P}_{2, p}\Lambda^{1}(\mathcal{T}_{h}^{3})\subset \bm{S}_{h}^{p}+\Sigma_{p}^{c}$. 
From the DoFs of $\mathcal{P}_{2, p}\Lambda^{1}(\mathcal{T}_{h}^{3})$, one can define the canonical interpolations $I^{c}: \mathcal{P}_{2, p}\Lambda^{1}(\mathcal{T}_{h}^{3})\mapsto \bm{S}_{h}^{p}$.  In fact, given $\bm{u}\in \mathcal{P}_{2, p}\Lambda^{1}(\mathcal{T}_{h}^{3})$,  we can define $I^{c}\bm{u}\in \bm{S}_{h}^{p}$ by defining the function values $I^{c}\bm{u}(\bm{x})$ and derivatives $\partial_{i} I^{c}\bm{u}(\bm{x})$ at the vertices, function values on the edges $I^{c}\bm{u}(e)$ and tangential components on the faces $I^{c}\bm{u}\times \bm{\nu}$ to be the same 
as the corresponding values of $\bm{u}$ ($I^{c}\bm{u}(\bm{x})=\bm{u}(\bm{x})$, $\partial_{i}I^{c}\bm{u}(\bm{x})=\partial_{i}\bm{u}(\bm{x})$, $I^{c}\bm{u}({e})=\bm{u}({e})$,  $I^{c}\bm{u}\times \bm{\nu}=\bm{u}\times \bm{\nu}$).   Then we define the normal components to zero $I^{c}\bm{u}\cdot\bm{\nu}=0$, which is consistent across the boundary of elements. For any $\bm{u}\in \mathcal{P}_{2, p}\Lambda^{1}(\mathcal{T}_{h}^{3})$, it is easy to see that $\bm{u}_{b}|_{t}:=(\bm{u}-I^{c}\bm{u})|_{t}\in {\0{\mathcal{P}}}_p \Lambda^{1}(t)=\Sigma_{t, p}^{c}$. This implies $\bm{u}=I^{c}\bm{u}+\bm{u}_{b}$ can be decomposed as a sum of the Hermite elements and local bubbles.
\end{proof}


\newcase{Element $\mathcal{P}_{2, p}\Lambda^{2}(\mathcal{T}_{h}^{3})\subset H(\div)$.} The construction of $\mathcal{P}_{2, p}\Lambda^{2}(\mathcal{T}_{h}^{3})$ has appeared in Stenberg \cite{Stenberg2010}. The space $\mathcal{P}_{2, p}\Lambda^{2}(\mathcal{T}_{h}^{3})$ can be characterised as:
$$
\mathcal{P}_{2, p}\Lambda^{2}(\mathcal{T}_{h}^{3})=\{ \bm{v}\in H(\div): \bm{v}|_{t}\in (\mathcal{P}_p (t))^{3}, \forall t\in \T_{h}^{3}; \bm{v}\in C^{0}(\V)\}.
$$

The local DoFs are:
\begin{itemize}
\item
 function values of each component $\bm{u}_{i}(\bm{x})$ at each vertex $\bm{x}$, $i=1, 2, 3$,
\item
face DoFs: 
$$
\int_{f}\left (\bm{u}\cdot \bm{\nu}_{f}\right ) \cdot {q},  \quad \forall f\in \mathcal{F}, {q}\in \mathcal{P}_p (f), {q}=0 \mbox{  at the vertices of } f.
$$
\item
interior DoFs
$$
\int_{t} \bm{u}\cdot \bm{v}, \quad \forall \bm{v}\in \mathcal{P}_{p-1}^{-}\Lambda^{1}(t), t\in \mathcal{T}_{h}^{3}.
$$
\end{itemize}

\begin{lemma}
The DoFs for $\mathcal{P}_{2, p}\Lambda^{2}(\mathcal{T}_{h}^{3})$ are locally unisolvent.
\end{lemma}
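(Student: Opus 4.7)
The proof will follow the standard two-step strategy for finite element unisolvence: first match the total count of local degrees of freedom with $\dim(\mathcal{P}_p(t))^3$, then show that vanishing of all DoFs forces $\bm{u}=0$ on $t$.

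For the dimension count, I would tally: $3\times 4 = 12$ vertex DoFs; on each face $f$, the space of test functions $q\in\mathcal{P}_p(f)$ vanishing at the three vertices has dimension $\binom{p+2}{2}-3$, giving $4\bigl(\binom{p+2}{2}-3\bigr)$ face DoFs; and by Lemma \ref{lem:dimPminus} the interior DoFs contribute $\dim\mathcal{P}_{p-1}^{-}\Lambda^{1}(t)=(p-1)\binom{p+2}{2}$. Summing, the $-12$ from the face term cancels the vertex contribution and I expect to land on $(p+3)\binom{p+2}{2}/1 = 3\binom{p+3}{3}=\dim(\mathcal{P}_p(t))^3$, as required.

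The vanishing argument is where the face DoFs play their crucial role. Assume all DoFs of $\bm{u}\in\mathcal{P}_p(t)^3$ vanish. From the vertex DoFs, every component of $\bm{u}$ is zero at the four vertices; in particular the scalar polynomial $\bm{u}\cdot\bm{\nu}_f\in\mathcal{P}_p(f)$ vanishes at the three vertices of each face $f$. Hence $\bm{u}\cdot\bm{\nu}_f$ is itself an admissible test function $q$ in the face DoFs, and substituting it gives
\[
\int_f (\bm{u}\cdot\bm{\nu}_f)^2\,ds=0,
\]
so $\bm{u}\cdot\bm{\nu}_f\equiv 0$ on each face. Therefore $\bm{u}\in{\0{\mathcal{P}}}_p\Lambda^{2}(t)$, i.e. $\bm{u}$ has vanishing normal trace on $\partial t$. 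Finally, the interior DoFs match exactly the pairing required by Lemma \ref{interDOF} with $k=2$, $n=3$ (the companion space being $\mathcal{P}_{p-n+k}^{-}\Lambda^{n-k}(t)=\mathcal{P}_{p-1}^{-}\Lambda^{1}(t)$), which forces $\bm{u}=\bm{0}$.

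The only step that needs any real thought is the face argument, and its trick is precisely the one also used for $\mathcal{P}_{1,p}\Lambda^{1}(\mathcal{T}_h^{3})$ in Lemma \ref{lem:unisol-2dr1}: by choosing the family of face test functions to vanish exactly on the skeleton already controlled by the vertex DoFs, one ensures the normal trace $\bm{u}\cdot\bm{\nu}_f$ lies in that same test space and can be paired with itself. Everything else is dimension bookkeeping and an immediate application of Lemma \ref{interDOF}.
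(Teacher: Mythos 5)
Your proof is correct: the dimension count $12+4\bigl(\binom{p+2}{2}-3\bigr)+(p-1)\binom{p+2}{2}=(p+3)\binom{p+2}{2}=3\binom{p+3}{3}$ checks out, and the vanishing argument (vertex values force $\bm{u}\cdot\bm{\nu}_f$ to be an admissible face test function, hence zero on $\partial t$, then Lemma \ref{interDOF} with $k=2$, $n=3$ finishes) is exactly the template the paper uses for its other unisolvence lemmas. The paper itself states this lemma without proof, deferring to Stenberg, so your argument fills that gap in the same style as the paper's proofs for $\mathcal{P}_{1,p}\Lambda^{1}(\mathcal{T}_h^3)$ and $\mathcal{P}_{2,p}\Lambda^{1}(\mathcal{T}_h^3)$.
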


The global dimension of $\mathcal{P}_{2, p}\Lambda^{2}(\mathcal{T}_{h}^{3})$ is:
$$
\mathrm{dim}(\mathcal{P}_{2, p}\Lambda^{2}(\mathcal{T}_{h}^{3}))=3V+\frac{1}{2}(p^{2}+3p-4)F+\frac{1}{2}(p-1)(p+1)(p+2)T.
$$

\newcase{Element $\mathcal{P}_{2, p}\Lambda^{3}(\mathcal{T}_{h}^{3})\subset L^{2}(\Omega).$}
As above, $\mathcal{P}_{2, p}\Lambda^{3}(\mathcal{T}_{h}^{3})$ is the space of  piecewise polynomials of degree $p$:
$$
\mathcal{P}_{2, p}\Lambda^{3}(\mathcal{T}_{h}^{3}):=\{ q\in L^{2}:  q|_{T}\in \mathcal{P}_p , \forall T\in \mathcal{T} \}.
$$
The  dimension reads
$$
\mathrm{dim}(\mathcal{P}_{2, p}\Lambda^{3}(\mathcal{T}_{h}^{3}))={p+3 \choose 3}T.
$$

We verify the exactness on a contractible domain in the following theorem:
\begin{theorem}
The sequence in 3D ($p\geq 2$)
  \begin{equation}\label{discrete-sequence3d}
\begin{CD}
\mathbb{R}@>>>\mathcal{P}_{2, p+3}\Lambda^{0}(\mathcal{T}_{h}^{3})@>{\grad}>>\mathcal{P}_{2, p+2}\Lambda^{1}(\mathcal{T}_{h}^{3}) @>\curl >>   \mathcal{P}_{2, p+1}\Lambda^{2}(\mathcal{T}_{h}^{3}) @>\mathrm{div} >>\mathcal{P}_{2, p}\Lambda^{3}(\mathcal{T}_{h}^{3}) @ > >>  0
\end{CD}
\end{equation}
is exact on contractible domains.
\end{theorem}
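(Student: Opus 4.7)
The plan is to mirror the argument used for the $r=1$ complex \eqref{3dl1}. First I would verify that \eqref{discrete-sequence3d} is a complex by tracking how $\grad$, $\curl$, and $\div$ each reduce regularity by exactly one order at vertices, edges, and faces, so that the images land in the next space: for example, $u\in \mathcal{P}_{2,p+3}\Lambda^0$ is $C^2(\V)$, $C^1(\E)$, $C^0(\F)$, and hence $\grad u$ is $C^1(\V)$, $C^0(\E)$, tangentially continuous on $\F$, i.e.\ an element of $\mathcal{P}_{2,p+2}\Lambda^1$. Exactness at $\mathcal{P}_{2,p+3}\Lambda^0(\T_h^3)$ is trivial since its kernel under $\grad$ consists only of global constants on a connected domain.

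For exactness at $\mathcal{P}_{2,p+2}\Lambda^1(\T_h^3)$, I would use the identification
\[
\mathcal{P}_{2, p+2}\Lambda^{1}(\T_h^3) = \mathcal{P}_{p+2}\Lambda^1(\T_h^3) \cap \{\bm{E} : \bm{E} \in C^1(\V),\ \bm{E} \in C^0(\E)\},
\]
combine it with the standard de Rham identity $\ker(\curl, \mathcal{P}_{p+2}\Lambda^1(\T_h^3)) = \grad \mathcal{P}_{p+3}\Lambda^0(\T_h^3)$, and then exploit the elementary observation that a scalar $u$ satisfies $\grad u \in C^1(\V) \cap C^0(\E)$ if and only if $u \in C^2(\V) \cap C^1(\E)$. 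This yields
\[
\ker(\curl, \mathcal{P}_{2,p+2}\Lambda^1(\T_h^3)) = \grad \mathcal{P}_{2,p+3}\Lambda^0(\T_h^3),
\]
exactly as in the corresponding step of the $r=1$ proof.

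Surjectivity of $\div : \mathcal{P}_{2,p+1}\Lambda^2(\T_h^3) \to \mathcal{P}_{2,p}\Lambda^3(\T_h^3)$ can be taken from the inf-sup condition established by Stenberg \cite{Stenberg2010} for precisely this $H(\div)$--$L^2$ pair, which delivers exactness at the rightmost space. Having proved exactness at three of the four nontrivial spots, exactness at $\mathcal{P}_{2,p+1}\Lambda^2(\T_h^3)$ reduces to verifying the Euler identity
\[
\dim \mathcal{P}_{2,p+3}\Lambda^0 - \dim \mathcal{P}_{2,p+2}\Lambda^1 + \dim \mathcal{P}_{2,p+1}\Lambda^2 - \dim \mathcal{P}_{2,p}\Lambda^3 = 1,
\]
by plugging in the four explicit dimension formulas tabulated in the preceding subsection and collapsing the $V$, $E$, $F$, $T$ contributions via $V-E+F-T=1$ on a contractible polyhedral domain.

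The main obstacle I anticipate is the bookkeeping in the final dimension count: the interior terms contain cubic polynomials in $p$ and the face terms contain quadratics, and all of these must cancel perfectly so that only the combinatorial constant $1$ survives. Since the $\mathcal{P}_{2,p+2}\Lambda^1$ space has the most intricate DoF structure (with both $\mathcal{P}^{-}_{p}\Lambda^{2}$ interior DoFs and face DoFs involving $(\mathcal{P}_{p-1}(f))^2$), I would carefully double-check its contribution against the already-established formula before collapsing the alternating sum; the remaining arithmetic is tedious but routine.
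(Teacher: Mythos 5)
Your proposal is correct and follows essentially the same route as the paper's proof: surjectivity of $\div$ from Stenberg's inf-sup condition, exactness at $\mathcal{P}_{2,p+2}\Lambda^{1}$ via the identification with the standard N\'{e}d\'{e}lec space intersected with the vertex/edge continuity constraints together with the observation that $\grad u\in C^{1}(\mathcal{V})\cap C^{0}(\mathcal{E})$ forces $u\in C^{2}(\mathcal{V})\cap C^{1}(\mathcal{E})$, and finally a dimension count with Euler's formula to settle exactness at $\mathcal{P}_{2,p+1}\Lambda^{2}$. The only difference is cosmetic: the paper writes out the global dimension formulas explicitly before invoking ``straightforward calculations,'' whereas you defer that bookkeeping, but the logical structure is identical.
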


\begin{proof}
From the inf-sup condition of $\mathcal{P}_{2, p+1}\Lambda^{2}(\mathcal{T}_{h}^{3})$ and $\mathcal{P}_{2, p}\Lambda^{3}(\mathcal{T}_{h}^{3})$ which was proved in \cite{Stenberg2010} (it also follows from the inf-sup condition of the Hu-Zhang type vector elements below, where the velocity space is smaller), we see that $\mathrm{div}: \mathcal{P}_{2, p+1}\Lambda^{2}(\mathcal{T}_{h}^{3}) \rightarrow \mathcal{P}_{2, p}\Lambda^{3}(\mathcal{T}_{h}^{3})$ is onto.

We recall that $\mathcal{P}_p \Lambda^{1}(\mathcal{T}_{h}^{3})$ and $\mathcal{P}_p \Lambda^{0}(\mathcal{T}_{h}^{3})$ represent the N\'{e}d\'{e}lec edge element of the second kind of degree $p$ and the Lagrange element of degree $p$ respectively.


From the definition of $\mathcal{P}_{2, p}\Lambda^{1}(\mathcal{T}_{h}^{3})$,  we have
$$
\mathcal{P}_{2, p}\Lambda^{1}(\mathcal{T}_{h}^{3})=\mathcal{P}_p \Lambda^{1}(\mathcal{T}_{h}^{3})\cap \{\bm{w}: \bm{w}\in C^{1}(\V), \bm{w}\in C^{0}(\E)\},
$$
and
$$
\ker \left ({\curl}, \mathcal{P}_{2, p}\Lambda^{1}(\mathcal{T}_{h}^{3})\right )=\ker\left ({\curl}, \mathcal{P}_p \Lambda^{1}(\mathcal{T}_{h}^{3})\right )\cap \{\bm{w}: \bm{w}\in C^{1}(\V), \bm{w}\in C^{0}(\E)\}.
$$
We note that $\grad \mathcal{P}_{p+1}\Lambda^{0}(\mathcal{T}_{h}^{3})=\mathrm{ker}\left (\curl, \mathcal{P}_p \Lambda^{1}(\mathcal{T}_{h}^{3})\right)$ by the exactness of the standard finite element de Rham complex, where
$$
\grad \mathcal{P}_{p+1}\Lambda^{0}(\mathcal{T}_{h}^{3}):=\{\grad u: u\in \mathcal{P}_{p+1}\Lambda^{0}(\mathcal{T}_{h}^{3})\}.
$$

Therefore we have
\begin{align*}
\grad \mathcal{P}_{2, p+1}\Lambda^{0}(\mathcal{T}_{h}^{3})&=\grad \left (\mathcal{P}_{p+1}\Lambda^{0}(\mathcal{T}_{h}^{3})\cap \{s: s\in C^{2}(\V), s\in C^{1}(\E) \}\right )\\&
=\ker\left ({\curl}, \mathcal{P}_{2, p}\Lambda^{1}(\mathcal{T}_{h}^{3})\right ),
\end{align*}
since if $\grad u$ has $C^{1}$  continuity at vertices and $C^{0}$ on edges, $u$ has to be $C^{2}$ and $C^{1}$ at the vertices and on the edges.

It remains to show that for each $\bm{v}_{h}\in \mathcal{P}_{2, p}\Lambda^{2}(\mathcal{T}_{h}^{3})$ satisfying $\div \bm{v}_{h}=0$, we have $\bm{v}_{h}=\curl \bm{w}_{h}$ for some $\bm{w}_{h} \in\mathcal{P}_{2, p+1}\Lambda^{1}(\mathcal{T}_{h}^{3})$. Since we have shown the exactness at other indices, it suffices to check the dimension now.

We summarise the global dimension of the sequence as follows:
  \begin{equation*}
\begin{CD}
1\rightarrow 10V+[2(p-1)+(p-2)]E+{p-1\choose 2}F+{p+2 \choose 3}T \\
\rightarrow 12V+3(p-1)E+2{p+1 \choose 2}F+\left [1/2(p+2)^{3}-(p+2)^{2}-1/2(p+2)+1\right ]T \\
\rightarrow    3V + 1/2(p^{2}+5p)F+1/2p(p+2)(p+3)T 
 \rightarrow  {p+3 \choose 3}  \rightarrow  0.
\end{CD}
\end{equation*}
By straightforward calculations, we know that \eqref{discrete-sequence3d} satisfies the dimension condition of the exactness.
\end{proof}

\subsection{Basis functions in 3D}

For the ${H}(\curl)$ finite element space $\mathcal{P}_{2, p}\Lambda^{1}(\mathcal{T}_{h}^{3})$, we can group the basis functions into several classes. Hereafter, we will use $\psi_{\bm{x}}$ to denote the Hermite nodal basis at a Hermite interpolation point $\bm{x}$, i.e. $\psi_{\bm{x}}(\bm{x})=1$, $\psi_{\bm{x}}(\bm{y})=0$ at any Hermite interpolation point $\bm{y}\neq \bm{x}$, and $\psi_{\bm{x}}$ has vanishing first order derivatives at vertices.
\begin{enumerate}
\item
Vertex-based basis functions: given $\bm{x}\in \mathcal{V}$, its twelve basis functions are 
\begin{align*}
\bm{w}_{\bm{x}, i}&=\psi_{\bm{x}}\bm{e}_{i}, \quad i=1, 2, 3,\\
\tilde{\bm{w}}_{\bm{x}, i, j}&= \tilde{\psi}_{\bm{x}, i}\bm{e}_{j}, \quad i, j=1, 2, 3,
\end{align*}
where $\tilde{\psi}_{\bm{x}, i}$ is the basis function corresponding to the vertex derivative DoF satisfying $\left (\partial^{k}\tilde{\psi}_{\bm{x}, i}\right )\Big |_{\bm{x}}=\delta^{k}_{i}$ and $\tilde{\psi}_{\bm{x}, i}({\bm{y}})=0$ at all the Hermite points $\bm{y}$. Here $\bm{e}_{i}, ~i=1, 2, 3$ are the three bases of $\mathbb{R}^{3}$.

\item
Edge-based basis functions: given a Hermite point $\bm{x}$ on an edge $e$, its associated three basis functions:
$$
\bm{w}_{e, \bm{x}, i}= \psi_{\bm{x}}\bm{e}_{i}, \quad i=1,2,3.
$$

\item
Face-based basis functions: given a Hermite point $\bm{x}$ on a face $f$, its associated two basis functions with tangential directions:
$$
\bm{w}_{f, \bm{x}, i}^{\tau}= \psi_{\bm{x}}\bm{\tau}_{{f}, \bm{x}, i},\quad i=1,2,
$$
where $\bm{\tau}_{f, \bm{x}, i}$ is the tangential vector of the face $f$ at $\bm{x}$.

\item
Face-based basis functions: given a Hermite point $\bm{x}$ on a face $f$, its associated basis functions with the normal direction:
$$
\bm{w}_{f, \bm{x}, i}^{\nu}= \psi_{\bm{x}}|_{t_{i}}\bm{\nu}_{f}, \quad i=1,2,
$$
where $t_{1}$ and $t_{2}$ are the two elements sharing the face $f$, $\bm{\nu}_{f}$ is the normal vector of the face $f$.
\item
Interior basis functions: at each interior Hermite point $\bm{x}$, its three associated basis functions: 
$$
\bm{w}_{t, \bm{x}, i}=\psi_{\bm{x}}\bm{e}_{i}, \quad i=1,2,3.
$$
\end{enumerate}

\subsection{Asymptotic dimensions of the global finite element spaces}

With enhanced smoothness, the dimensions of the global DoFs are significantly reduced. The advantages of the discrete ${H}(\mathrm{div})$ space $\mathcal{P}_{2, p}\Lambda^{2}(\mathcal{T}_{h}^{3})$ have been shown in Stenberg \cite{Stenberg2010}. So here we focus on the ${H({\curl})}$ subspace $\mathcal{P}_{2, p}\Lambda^{1}(\mathcal{T}_{h}^{3})$. An analogous discussion is also possible for the 2D elements.

To see this, we first recall the asymptotic estimates of the dimensions (c.f. \cite{Stenberg2010}):
\begin{align}\label{asymptotic-dimensions}
V=\mathcal{O}\left(\frac{1}{6}T\right),\quad E=\mathcal{O}\left (7V\right )=\mathcal{O}\left (\frac{7}{6}T\right),\quad  F=\mathcal{O}(2T).
\end{align}

\begin{remark}
In 2D, such asymptotic estimates can be established in a rigorous way. From Euler's formula, one has $V-E+F=1$. Since two triangles share one edge and each triangle contains three edges, one further has $2E=3F$ asymptotically. Combining these two identities, one obtains the asymptotic relation $V=\mathcal{O}(1/2F)$. However, in 3D we do not have enough information to give such estimates for general triangulations. To give similar estimates, we consider a special triangulation where each cube is divided into fourteen tetrahedra by connecting the center with eight vertices and the centers of the six faces. In this case, the vertex at the center is connected by  fourteen edges and each edge contains two vertices. Therefore we give an asymptotic estimate $E=7V$. Together with Euler's formula $V-E+F-T=1$, we derived the estimates \eqref{asymptotic-dimensions}.
\end{remark}

We can estimate the dimension of the N\'{e}d\'{e}lec element of the second kind (c.f. \cite{Boffi.D;Brezzi.F;Fortin.M.2013a}):
\begin{align*}
\mathrm{dim}\left (  \mathcal{P}_p \Lambda^{1}\left (\mathcal{T}_{h}^{3}\right )  \right )&=6(p+1)E+4(p+1)(p-1)F+\frac{1}{2}(p+1)(p-1)(p-2)T\\&
=\mathcal{O}\left(\left [ 7(p+1)+8(p+1)(p-1) +\frac{1}{2}(p+1)(p-1)(p-2)   \right ]T\right)\\
&=\mathcal{O}\left(\left (\frac{1}{2}p^{3}+7p^{2}+\frac{13}{2}p \right)T\right).
\end{align*}
For the new element $\mathcal{P}_{2, p}\Lambda^{1}(\mathcal{T}_{h}^{3})$ we have
\begin{align*}
\mathrm{dim}(\mathcal{P}_{2, p}\Lambda^{1}(\mathcal{T}_{h}^{3}))&= 12V+3(p-3)E+2{p-2 \choose 2}F+\left(\frac{1}{2}p^{3}-p^{2}-\frac{1}{2}p+1\right)T\\&
=\mathcal{O}\left(\left [  2+\frac{7}{2}(p-3)+2(p-1)(p-2)+\left(\frac{1}{2}p^{3}-p^{2}-\frac{1}{2}p+1\right)    \right ]T\right)\\
&=\mathcal{O}\left(\left(\frac{1}{2}p^{3}+p^{2}-3p-\frac{11}{2}\right)T\right).
\end{align*}
We see that $\mathcal{P}_{2, p}\Lambda^{1}(\mathcal{T}_{h}^{3})$ has fewer DoFs than $ \mathcal{P}_p \Lambda^{1}\left (\mathcal{T}_{h}^{3}\right ) $:
$$
\mathrm{dim}\left ( \mathcal{P}_p \Lambda^{1}\left (\mathcal{T}_{h}^{3}\right ) \right )-\mathrm{dim}(\mathcal{P}_{2, p}\Lambda^{1}(\mathcal{T}_{h}^{3}))=\mathcal{O}\left( \left (6p^{2}+\frac{19}{2}p+\frac{11}{2}\right )T\right ).
$$
For example, for $p=4$, $\mathrm{dim}\left ( \mathcal{P}_p \Lambda^{1}\left (\mathcal{T}_{h}^{3}\right ) \right )=\mathcal{O}\left(170T\right)$ and $\mathrm{dim}(\mathcal{P}_{2, p}\Lambda^{1}(\mathcal{T}_{h}^{3}))<\mathcal{O}\left(31T\right)$.


\subsection{Hu-Zhang type $H(\mathrm{div})$ space}

The Stenberg $H(\div)$ element $\mathcal{P}_{2, p}\Lambda^{2}$ in 3D does not have continuity on the edges, except for the vertex continuity. On the other hand, the Hu-Zhang element for linear elasticity has continuous normal components on the edges. Using an analogous  idea, we can design another vector $H(\div)$ element with normal continuity on edges. The inf-sup condition of the proposed $H(\div)$-$L^{2}$ finite element pair also holds.

The new element can be described as
$$
\bm{V}_{p}^{h}:=\{\bm{v}\in H(\div; \Omega): v|_{t}\in \mathcal{P}_p (t), \forall t\in \mathcal{T}, \bm{v}\in C^{0}(\mathcal{V}), \bm{v}\cdot\bm{\nu}_{i}\in C^{0}(\mathcal{E}), i=1, 2\}.
$$
The local degrees of freedom are:
\begin{itemize}
\item
function value of each component at each vertex:
 $$
 \quad\bm{u}_{i}(\bm{x}), ~ \bm{x}\in \mathcal{V}, i=1, 2, 3,
 $$
\item
the moments on each edge: 
$$
\int_{e}(\bm{u}\cdot \bm{\nu}_{e, i})w, \quad \forall w\in \mathcal{P}_{p-2}(e), ~e\in \mathcal{E}, i=1, 2,
$$
\item
on each face $f$:
$$
\int_{f}\left (\bm{u}\cdot \bm{\nu}_{f}\right ) {w}, \quad \forall {w}\in  \mathcal{P}_{p-3}(f),
$$
\item
interior DoFs on each tetrahedron $t\in \mathcal{T}$:
$$
\int_{t}\bm{u}\cdot \bm{v},  \quad\forall \bm{v}\in \mathcal{P}^{-}_{p-1}\Lambda^{1}(t).
$$
\end{itemize}

The proof of the unisolvence is similar to Lemma \ref{lem:unisol-2dr1}, following Lemma \ref{interDOF}. From a similar argument as Lemma \ref{lem:decomposition-21},  the interior bubble functions of $\bm{V}_{p}^{h}$ coincide with those of the BDM elements and $\mathcal{P}_{2, p}\Lambda^{2}(\mathcal{T}_{h}^{3})$. 

Following the proof of the inf-sup conditions in Hu and Zhang \cite{hu2015family} (Lemma 3.2), replacing the space of  rigid body motions  by  the space of constants and replacing the symmetric gradient $\epsilon$ by the gradient, we can prove the inf-sup condition of the pair $\bm{V}_{p}^{h}$-$\mathcal{P}_{ p-1}\Lambda^{2}$ where $p\geq 2$.

There is a Lagrange type basis  for $\bm{V}_{p}^{h}$. We recall that $\phi_{\bm{x}}$ is the  Lagrange nodal basis at $\bm{x}$, i.e. $\phi_{\bm{x}}(\bm{x})=1$, $\phi_{\bm{x}}(\bm{y})=0$ for the Lagrange interpolation point $\bm{y}\neq \bm{x}$. For $\bm{V}_{p}^{h}$ we can  group the basis functions into several classes:
\begin{enumerate}
\item
Vertex-based basis functions: given $\bm{x}\in \mathcal{V}$, its three basis functions are 
$$
\bm{v}_{\bm{x}, i}=\phi_{\bm{x}}\bm{e}_{i}, \quad i=1, 2, 3.
$$
\item
Edge-based basis functions: given a Lagrange point $\bm{x}$ on an edge $e$, its associated basis functions with the tangential direction:
$$
\bm{v}_{e, \bm{x}, i}^{\tau}= \phi_{\bm{x}}|_{t_{i}}\bm{\tau}_{e}, 
$$
where $\bm{\tau}_{e}$ is the tangential direction of $e$, $t_{i}$ 
is an element sharing $e$ as an edge.
\item
Edge-based basis functions: given a Lagrange point $\bm{x}$ on an edge $e$, its associated basis functions with normal directions:
$$
\bm{v}_{e, \bm{x}, i}^{\nu}= \phi_{\bm{x}}\bm{\nu}_{e, i},\quad i=1, 2,
$$
where $\bm{\nu}_{e, i}, i=1, 2,$ are the two normal directions of $e$.

\item
Face-based basis functions: given a Lagrange point $\bm{x}$ on a face $f$, its associated two basis functions with tangential directions:
$$
\bm{v}_{f, \bm{x}, i}= \phi_{\bm{x}}|_{t_{i}}\bm{\tau}_{{f}, \bm{x}, i},\quad i=1,2,
$$
where $\bm{\tau}_{f, \bm{x}, i}, i=1, 2,$ are the two tangential vectors of the face $f$ at $\bm{x}$,  $t_{1}$ and $t_{2}$ are the two elements  sharing the face $f$. Functions $\phi_{\bm{x}}|_{t_{i}}$ may take different values when $i=1$ and $i=2$.

\item
Face-based basis functions: given a Lagrange point $\bm{x}$ on a face $f$, its associated basis function with the normal direction:
$$
\bm{v}_{f, \bm{x}}= \phi_{\bm{x}}\bm{\nu}_{f},
$$
where $\bm{\nu}_{f}$ is the normal vector of the face $f$. 

\item
Interior basis functions: at each interior Lagrange point $\bm{x}$, its three associated basis functions: 
$$
\bm{v}_{t, \bm{x}, i}=\phi_{\bm{x}}\bm{e}_{i}, \quad i=1,2,3.
$$
\end{enumerate}

\section{Boundary conditions}\label{sec:bc}

Since the proposed elements have extra smoothness compared with the standard de Rham complexes, the boundary conditions call for more explanations. 

\begin{figure}\vspace{-60pt}
\setlength{\unitlength}{1.2cm}
\begin{center}
\begin{picture}(4.5,4.5)(-2, 0)
\put(-2, 2){\line(1, 0){4}}
\put(-2, 2){\line(1, -1){2}}
\put(0, 0){\line(0, 1){2}}
\put(0, 0){\line(1, 1){2}}
\put(0, 2){\circle*{0.1}}
\put(0, 2){\circle{0.18}}
\put(0, 2.2){$Q$}
\end{picture}
\begin{picture}(4.5,4.5)(-3, 0)
\put(-2, 1){\line(2, 1){2}}
\put(2, 1){\line(-2, 1){2}}
\put(-2, 1){\line(2, -1){2}}
\put(0, 0){\line(0, 1){2}}
\put(0, 0){\line(2, 1){2}}
\put(0, 2){\circle*{0.1}}
\put(0, 2){\circle{0.18}}
\put(0, 2.2){$Q'$}
\end{picture}
\end{center}
\caption{Boundary DoFs. $Q'$ is a corner vertex, while $Q$ is not.}
\label{fig:boundary-DOF}
\end{figure}
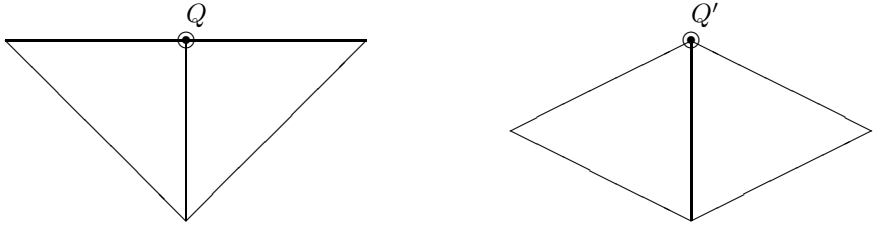

\subsection{Two space dimensions}
We start from two space dimensions and consider discretizations of the following spaces with essential boundary conditions:
$$
H_{0}(\curl; \Omega):=\left \{s\in H(\curl; \Omega), \left . s\right |_{\partial \Omega}=0\right \},
$$
$$
{H}_{0}(\mathrm{div}; \Omega):=\left \{\bm{w}\in {H}(\mathrm{div}; \Omega),  \left .\bm{w}\right |_{\partial \Omega}\cdot\bm{\nu}=0\right \}.
$$
We denote
$$
L^{2}_{0}(\Omega):=\left \{q\in L^{2}(\Omega): \int_{\Omega}q=0\right \}.
$$
Now we are in a position to define the following discrete de Rham sequence with homogeneous boundary conditions: 
  \begin{equation}\label{discrete-sequence01}
\begin{CD}
0@>>> {\0{\mathcal{P}}}_{1, p+2}\Lambda^{0}\left (\mathcal{T}_{h}^{2}\right ) @>\curl>>  {\0{\mathcal{P}}}_{1, p+1}\Lambda^{1}\left (\mathcal{T}_{h}^{2}\right )@>\mathrm{div} >>  {\0{\mathcal{P}}}_{1, p}\Lambda^{2}\left (\mathcal{T}_{h}^{2}\right ) @ > >>  0.
\end{CD}
\end{equation}

To impose the vanishing boundary conditions in \eqref{discrete-sequence01} all  function values of $ \mathcal{P}_{1, p+2}\Lambda^{0}\left (\mathcal{T}_{h}^{2}\right )$ and  normal components of $\mathcal{P}_{1, p+1}\Lambda^{1}\left (\mathcal{T}_{h}^{2}\right )$ should be set  zero on the boundary.  However in the implementation consistent conditions should be considered due to the extra smoothness. For example, $\mathcal{P}_{1, p+2}\Lambda^{0}\left (\mathcal{T}_{h}^{2}\right )$ has two derivative DoFs at each vertex. If boundary values of a function in $\mathcal{P}_{1, p+2}\Lambda^{0}\left (\mathcal{T}_{h}^{2}\right )$ are set  zero, the tangential derivatives along the boundary should also be zero due to the consistency. Therefore in the implementation, the DoFs for the tangential derivatives should be imposed explicitly as well. On the other hand, the normal derivatives should be left free. There is a similar situation for the vertex DoFs of $\mathcal{P}_{1, p+1}\Lambda^{1}\left (\mathcal{T}_{h}^{2}\right )$.

Consistency conditions are related to the geometry of the boundary.  We first follow the terminology in \cite{falk2013stokes} to introduce the definition of corner boundary vertices.
\begin{definition}
A boundary vertex is called a {\it corner vertex} if the two adjacent boundary edges sharing this vertex do not lie on a straight line.
\end{definition}
 In Figure \ref{fig:boundary-DOF}, $Q'$ is a corner vertex while $Q$ is not.

At a non-corner boundary vertex, the two tangential derivatives along its two adjacent edges coincide up to a sign. In this case, we only specify the DoF value of this tangential derivative.
 The number of  global DoFs (unknowns) is reduced by two at this vertex (one function value and one tangential derivative). Otherwise we should specify both derivative DoFs at each corner vertex and the number of DoFs is reduced by three. 

For the $H(\div)$ conforming space $\mathcal{P}_{1, p+1}\Lambda^{1}(\mathcal{T}_{h}^{2})$, we explicitly specify the normal DoFs on each edge and DoFs at each vertex. For a non-corner boundary vertex, we only specify the normal component and for a corner boundary  vertex, we should specify both  components at that vertex.

We denote the number of boundary vertices by $V_{0}$ and the number of non-corner boundary vertices by $V_{0}^{s}$.  Then the following dimension count holds:
$$
{\dim}\left ({\0{\mathcal{P}}}_{1, p}\Lambda^{0}(\mathcal{T}_{h}^{2})\right )={\dim}\left (\mathcal{P}_{1, p}\Lambda^{0}(\mathcal{T}_{h}^{2})\right )- (p-3)E_{0}- 3V_{0}+V_{0}^{s}, 
$$
$$
{\dim}\left ({\0{\mathcal{P}}}_{1, p}\Lambda^{1}(\mathcal{T}_{h}^{2})\right )= {\dim}\left (\mathcal{P}_{1, p}\Lambda^{1}(\mathcal{T}_{h}^{2})\right )-(p-1)E_{0}-2V_{0}+V_{0}^{s},
$$
and
$$
{\dim}\left ({\0{\mathcal{P}}}_{1, p}\Lambda^{2}(\mathcal{T}_{h}^{2})\right )={\dim}\left (\mathcal{P}_{1, p}\Lambda^{2}(\mathcal{T}_{h}^{2})\right )-1.
$$

In fact, for ${\0{\mathcal{P}}}_{1, p}\Lambda^{0}(\mathcal{T}_{h}^{2})$, we remove $p-3$ function value DoFs on each boundary edge, three vertex DoFs at each corner boundary vertex and two DoFs at each non-corner boundary vertex (one function value and one tangential derivative). For ${\0{\mathcal{P}}}_{1, p}\Lambda^{1}(\mathcal{T}_{h}^{2})$, we remove $p-1$ normal DoFs on each boundary edge, two normal DoFs at each corner boundary vertex and one normal DoF at each non-corner boundary vertex.

On contractible domains the number of boundary vertices equals that of boundary edges, i.e. $E_{0}=V_{0}$. Therefore we have
\begin{align*}
&{\dim}\left ({\0{\mathcal{P}}}_{1, p+2}\Lambda^{0}(\mathcal{T}_{h}^{2})\right )+{\dim}\left ({\0{\mathcal{P}}}_{1, p}\Lambda^{2}(\mathcal{T}_{h}^{2})\right )-{\dim}\left ({\0{\mathcal{P}}}_{1, p+1}\Lambda^{1}(\mathcal{T}_{h}^{2})\right )\\
=&{\dim}\left (\mathcal{P}_{1, p+2}\Lambda^{0}(\mathcal{T}_{h}^{2})\right )+{\dim}\left (\mathcal{P}_{1, p}\Lambda^{2}(\mathcal{T}_{h}^{2})\right )-{\dim}\left (\mathcal{P}_{1, p+1}\Lambda^{1}(\mathcal{T}_{h}^{2})\right )+(E_{0}-V_{0})-1\\=&{\dim}\left (\mathcal{P}_{1, p+2}\Lambda^{0}(\mathcal{T}_{h}^{2})\right )+{\dim}\left (\mathcal{P}_{1, p}\Lambda^{2}(\mathcal{T}_{h}^{2})\right )-{\dim}\left (\mathcal{P}_{1, p+1}\Lambda^{1}(\mathcal{T}_{h}^{2})\right )-1=0.
\end{align*}

This shows that the dimension condition of exactness holds for \eqref{discrete-sequence01}. Checking of the exactness at ${\0{\mathcal{P}}}_{1, p+2}\Lambda^{0}(\mathcal{T}_{h}^{2}) $ and ${\0{\mathcal{P}}}_{1, p}\Lambda^{2}(\mathcal{T}_{h}^{2}) $ as the case without boundary conditions, we obtain the exactness of \eqref{discrete-sequence01}.

Boundary conditions for the family $r=2$ are similar. We refer to \cite{falk2013stokes} for detailed discussions.

\subsection{Three space dimensions}

For simplicity of presentation, we assume that the boundary of $\Omega$ is  homeomorphic to the sphere $S^{2}\subset \mathbb{R}^{3}$.
Sobolev spaces with vanishing boundary conditions are defined by
$$
H^{1}_{0}(\Omega):=\left \{v\in H^{1}(\Omega): \left . v\right |_{\partial \Omega}=0 \right\},
$$
$$
H_{0}(\curl; \Omega):=\left \{\bm{u}\in H(\curl; \Omega), \left . \bm{u}\right |_{\partial \Omega}\times \bm{\nu}=0\right \},
$$
$$
{H}_{0}(\mathrm{div}; \Omega):=\left \{\bm{w}\in {H}(\mathrm{div}; \Omega), \left . \bm{w}\right |_{\partial \Omega}\cdot\bm{\nu}=0\right \},
$$
where $\bm{\nu}$ is the unit normal vector of $\partial \Omega$, 
and as a convention, 
$$
L^{2}_{0}(\Omega):=\left \{q\in L^{2}(\Omega): \int_{\Omega}q=0\right \}.
$$
On the continuous level, the complex
$$
\begin{diagram}
0 & \rTo^{} & H^{1}_{0}(\Omega) & \rTo^{\grad} & H_{0}(\curl; \Omega) & \rTo^{\curl} & H_{0}(\div; \Omega) & \rTo^{\div} & L^{2}_{0}(\Omega) & \rTo^{} & 0
\end{diagram}
$$
is exact  on any contractible domain $\Omega$.

As we have seen in the 2D case, whether a boundary vertex is a corner or not depends on the number of independent edges (i.e. edges with linearly independent directions) sharing this vertex. This motivates us to give a similar definition in 3D.
\begin{definition}
A boundary vertex is called a {\it corner vertex} in 3D if the adjacent boundary edges  sharing this vertex are not coplanar.
\end{definition}

Below we take the scalar element space $\mathcal{P}_{2, p}\Lambda^{0}(\Omega)\subset H_{0}^{1}(\Omega)$ as an example to explain how the boundary geometry should be taken into consideration when we impose boundary conditions. 

For a corner boundary vertex in 3D, there are three linearly independent edges (precisely, three edges with linearly independent directions) connected to it. Therefore all derivatives (three first order derivatives and six second order derivatives) at a corner boundary vertex can be derived from given boundary value. On the other hand, at a non-corner boundary vertex, there are only two linearly independent directions along the boundary. Therefore at a non-corner boundary vertex, two tangential first order derivatives along the boundary and three tangential second order derivatives ($\partial_{\tau_{1}}^{2}$, $\partial_{\tau_{2}}^{2}$ and $\partial_{\tau_{1}}\partial_{\tau_{2}}$, where $\tau_{1}$ and $\tau_{2}$ are the two tangent vectors lying on the boundary $\partial \Omega$) can be determined from given boundary data and the corresponding degrees of freedom should be specified to impose boundary conditions. The degrees of freedom corresponding to the normal first order derivative ($\partial_{\nu}$) and the three second order derivatives related to the normal direction, i.e. $\partial_{\nu}^{2}$, $\partial_{\tau_{i}}\partial_{\nu}, ~i=1, 2$, should be treated as unknowns.

For edges in 3D, we similarly define:
\begin{definition}
A boundary edge is called a {\it corner edge} in 3D if the two adjacent faces (2D cells) on the boundary  sharing this edge are not coplanar.
\end{definition}
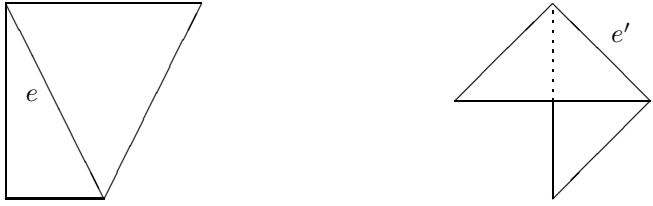
\begin{figure}
\vspace{-60pt}
\setlength{\unitlength}{1.3cm}
\begin{center}
\begin{picture}(4.5,4.5)(-3, 0)
\put(-1, 0){\line(1, 0){1}}
\put(-1, 0){\line(0, 1){2}}
\put(-1, 2){\line(1, -2){1}}
\put(0, 0){\line(1, 2){1}}
\put(-1, 2){\line(1, 0){2}}
\put(-0.8, 1){$e$}
\end{picture}\setlength{\unitlength}{1.3cm}
\begin{picture}(4.5,4.5)(-3, -1)
\put(-1, 0){\line(1, 1){1}}
\put(-1, 0){\line(1, 0){2}}
\put(1, 0){\line(-1, 1){1}}
\put(1, 0){\line(-1, -1){1}}
\put(0, -1){\line(0, 1){1}}
\multiput(0,0)(0, 0.1){10}{\line(0, 1){0.02}}
\put(0.6, 0.6){$e'$}
\end{picture}
\end{center}
\caption{ $e'$ is a corner edge, while $e$ is not.}
\label{fig:boundary-DOF-edge}
\end{figure}
In Figure \ref{fig:boundary-DOF-edge}, $e'$ is a corner edge, while $e$ is not. On a corner edge, derivatives of a function along the two normal directions can be determined by the function value on the boundary. Therefore all DoFs on corner edges should be specified from given boundary data.  On the other hand,  for a non-corner edge, derivative DoFs in the normal direction of the plane cannot be determined from given boundary data.
  In this case, such normal DoFs should be treated as unknowns in the algebraic system.

Boundary conditions for other spaces are similar. The general principle is that we specify all DoFs which can be obtained from given boundary data. 

\subsection{Non-homogeneous boundary conditions}

Paying attention to the geometry of boundaries discussed above, it is trivial to impose vanishing Dirichlet boundary conditions by setting all relevant DoFs zero. However, for non-homogeneous boundary conditions, extra complication may arise. This is usually due to the construction of basis functions.

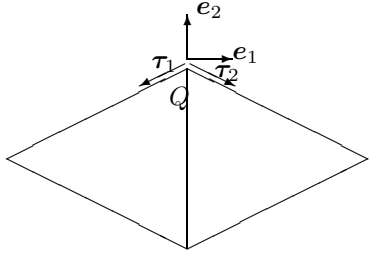
\begin{figure}
\vspace{-40pt}
\setlength{\unitlength}{1.2cm}
\begin{center}
\begin{picture}(4.5,4.5)(-2, 0)
\put(-2, 1){\line(2, 1){2}}
\put(2, 1){\line(-2, 1){2}}
\put(-2, 1){\line(2, -1){2}}
\put(0, 0){\line(0, 1){2}}
\put(0, 0){\line(2, 1){2}}
\put(-0.2, 1.6){$Q$}
\put(-0.03, 2.05){\vector(-2, -1){0.5}}
\put(-0.4, 2){$\bm{\tau}_{1}$}
\put(0.3, 1.9){$\bm{\tau}_{2}$}
\put(0.03, 2.05){\vector(2, -1){0.5}}
\put(0, 2.1){\vector(1, 0){0.5}}
\put(0, 2.1){\vector(0, 1){0.5}}
\put(0.5, 2.1){$\bm{e}_{1}$}
\put(0.1, 2.6){$\bm{e}_{2}$}
\end{picture}
\end{center}
\caption{Tangential/normal DoFs and derivatives along edges.}
\label{fig:tangent-normal-dof}
\end{figure}

For DoFs which should be specified, coefficients in front of the dual basis should be determined by linear combinations of boundary data. For example, in Figure \ref{fig:tangent-normal-dof}, $Q$ is a corner vertex. We use $\bm{e}_{1}$ and $\bm{e}_{2}$ to denote the two canonical basis vectors in $\mathbb{R}^{2}$ and use $\bm{\tau}_{1}$, $\bm{\tau}_{2}$ to denote the vectors corresponding to the two edges sharing $Q$. Assume that $\bm{e}_{1}=a_{1}\bm{\tau}_{1}+a_{2}\bm{\tau}_{1}$ and $\bm{e}_{2}=b_{1}\bm{\tau}_{1}+b_{2}\bm{\tau}_{2}$. Then the directional derivatives satisfy
\begin{align}\label{e-tau-1}
\partial_{\bm{e}_{1}}=a_{1}\partial_{\bm{\tau}_{1}}+a_{2}\partial_{\bm{\tau}_{2}},
\end{align}
and
\begin{align}\label{e-tau-2}
\partial_{\bm{e}_{2}}=b_{1}\partial_{\bm{\tau}_{1}}+b_{2}\partial_{\bm{\tau}_{2}}.
\end{align}
Hence given $\left .u\right |_{\partial \Omega}$, one can obtain $\partial_{\bm{\tau}_{1}}u$ and $\partial_{\bm{\tau}_{2}}u$ by taking derivatives on $\partial \Omega$. Then $\partial_{\bm{e}_{1}}\left ( \left .u\right |_{\partial \Omega}\right )$ and $\partial_{\bm{e}_{2}}\left ( \left .u\right |_{\partial \Omega}\right )$ can be obtained from \eqref{e-tau-1} and \eqref{e-tau-2}. This gives the coefficients in front of the dual basis of $\partial_{\bm{e}_{1}}$ and $\partial_{\bm{e}_{2}}$.

Boundary conditions for other spaces are analogous.


\section{Geometric decomposition}\label{sec:geometric-decomposition}

The geometric locations of the DoFs are essential to define a finite element. Based on this observation, the global finite element space can be decomposed according to the topological entities (c.f. \cite{arnold2009geometric}). Specifically, on a simplex $\Delta$, the dual basis of the DoFs spans a local space with vanishing trace on the boundary of  the patch associated to $\Delta$. This space is independent of the choice of basis. 

This idea is used as the definition of finite elements in Christiansen et al. \cite{christiansen2010finite}. In the finite element systems proposed in \cite{christiansen2010finite}, differential complexes are considered on all the simplexes with different dimensions.  The interelement continuity is guaranteed by requiring that the pull back to each simplex is single-valued, which is enforced by taking an inverse limit. Then the global function spaces can be decomposed as  a sequence of Whiney forms, which has the lowest polynomial degrees but lives on different topological entities, and sequences of bubble functions, each of which lives in the same patch. Although with different names, such a decomposition plays an important role in the study of high order methods, e.g. local complete sequences in Sch{\"o}berl and Zaglmayr \cite{schoberl2005high}, bounded commuting interpolations in Falk and Winther \cite{falk2014local,falk2016bubble}.

In this section, we give a similar decomposition of the new sequences with regularity $r=1$ and $r=2$. Each local space is the span of the dual basis of the DoFs on a topological entity.

Since the continuity involved is usually higher than the natural regularity of the existing de Rham elements in finite element exterior calculus, we will use jets, besides differential forms, in our discussions.
A {\it $r$-jet} means a function together with its Taylor expansion up to order $r$ in a coordinate-free way \cite{saunders1989geometry}. This is a generalisation of the differential forms to higher continuity. Jets and differential operators can be represented in local coordinates. For example, a  $1$-jet for a 2D $1$-form $\bm{v}$ can be represented in the coordinate form $(v^{1}, v^{1}_{1},  v^{1}_{2},  v^{2},  v^{2}_{1},  v^{2}_{2})$, where $v^{1}$ and $v^{2}$ are  the two components of $\bm{v}$ and $v^{i}_{j}$ is the $j$-th derivative of the $i$-th component.  In the definition of jets, $v^{i}$ and $v^{i}_{j}$ are considered to be independent. With such a representation, a differential operator can be identified with its symbol. In the above example, the divergence operator on $1$-form $\bm{v}$ has the coordinate representation $\div \bm{v}=v_{1}^{1}+v_{2}^{2}$. The operators $\grad$ and $\curl$ can be represented in a similar way.

Below we consider the geometric decomposition of the complex
  \begin{equation}\label{vertex-jet-r1}
\begin{CD}
\mathbb{R}@>>>\mathcal{P}_{r, p}\Lambda^{0} (\mathcal{T}_{h}^{n})@>d>> \mathcal{P}_{r, p-1}\Lambda^{1}(\mathcal{T}_{h}^{n}) @>d >>  \cdots @ > d>>  \mathcal{P}_{r, p-n}\Lambda^{n}(\mathcal{T}_{h}^{n}) @ > >>  0.
\end{CD}
\end{equation}
Although our examples  are in $n$D where $n\leq 3$, the local exactness will be verified for any $n\geq 1$ below.

\newcase{Case $r=1$.}


The vertex sequence can be written as
  \begin{equation}\label{vertex-jet-r1}
\begin{CD}
\mathbb{R}@>>>J^{1}\Lambda^{0}(v, \Omega) @>d>> J^{0}\Lambda^{1}(v, \Omega) @>d >> 0  @ > >>  \cdots @ > >>  0.
\end{CD}
\end{equation}
Here $J^{l}(v, \Omega)$ is the $l$-jet at vertex $v$ imbedded in $n$ dimensional space $\Omega$. We note that $\dim (J^{1}\Lambda^{0}(v, \Omega))=n+1$ and $\dim (J^{0}\Lambda^{1}(v, \Omega))=n$ in $n$ dimensions. In local coordinates, $\bm{u}\in J^{1}\Lambda^{0}(v, \Omega) $ and $\bm{w}\in J^{0}\Lambda^{1}(v, \Omega)$ have the form $(u, u_{1}, \cdots, u_{n})$ and $(w^{1}, w^{2}, \cdots, w^{n})$, where $u_{i}=\partial_{i}u$ is considered as an independent variable. 
 By counting the dimensions, \eqref{vertex-jet-r1} is exact.

The edge bubble sequence coincides with the bubble complex of the 1D Hermite-Lagrange pairs:
  \begin{equation}\label{edge-r1}
\begin{CD}
0@>>>{\0{\mathcal{P}}}_{1, p}\Lambda^{0}(e) @>d>>{\0{\mathcal{P}}}_{1, p-1}\Lambda^{1}(e) /\mathbb{R} @ > >>  0.
\end{CD}
\end{equation}
Here $e\in \mathcal{E}$ is a one dimensional simplex.  If $e$ is an interior edge, ${\0{\mathcal{P}}}_{1, p}\Lambda^{0}(e) $ contains functions with vanishing values and derivatives at the vertices and ${\0{\mathcal{P}}}_{1, p-1}\Lambda^{1}(e) /\mathbb{R} $ contains functions with vanishing values at the vertices of $e$ and vanishing integration along $e$. 

The face and interior bubble sequences are the same as the standard finite element de Rham sequences:
  \begin{equation}\label{face-r1}
\begin{CD}
0@>>>{\0{\mathcal{P}}}_p \Lambda^{0}(f) @>d>>{\0{\mathcal{P}}}_{p-1}\Lambda^{1}(f) @>d >> \mathcal{P}_{p-2}\Lambda^{2}(f)  /\mathbb{R}  @ > >>   0,
\end{CD}
\end{equation}
  \begin{equation}\label{volume-r1}
\begin{CD}
0@>>>{\0{\mathcal{P}}}_p \Lambda^{0}(t) @>d>>{\0{\mathcal{P}}}_{p-1}\Lambda^{1}(t) @>d >> {\0{\mathcal{P}}}_{p-2}\Lambda^{2}(t)  @ >d >>   \mathcal{P}_{p-3}\Lambda^{3}(t)  /\mathbb{R}@ > >>0.
\end{CD}
\end{equation}
The vanishing boundary conditions of \eqref{face-r1}-\eqref{volume-r1} are the same as the standard case and  \eqref{edge-r1}-\eqref{volume-r1} are exact.

\newcase{Case $r=2$.}


The vertex sequence can be written as
  \begin{equation}\label{vertex-jet-r2}
\begin{CD}
\mathbb{R}@>>>J^{2}\Lambda^{0}(v, \Omega) @>d>> J^{1}\Lambda^{1}(v, \Omega) @>d>> J^{0}\Lambda^{2}(v, \Omega) @> >> 0.
\end{CD}
\end{equation}
As discussed above,  $J^{l}(v, \Omega)$ is the $l$-jet on vertex $v$ imbedded in an $n$ dimensional domain $\Omega$. We have $\dim (J^{2}\Lambda^{0}(v, \Omega))=(1/2)(n^{2}+3n+2)$, $\dim (J^{1}\Lambda^{1}(v, \Omega))=n(n+1)$ and $\dim(J^{0}\Lambda^{2}(v, \Omega))={n \choose 2}=(1/2)n(n-1)$ in $n$ spatial dimensions. 

In 1D, the complex is exact (no two-forms involved). In 2D, the kernel of $\curl$ (rotation of $\grad$) consists of  constants  and $\div$ ($\rot$): $J^{1}\Lambda^{1}(v, \Omega)\rightarrow J^{0}\Lambda^{2}(v, \Omega)$ is onto, since for $\bm{w}\in  J^{0}\Lambda^{2}(v, \Omega)$ with the coordinate representation $(w)$, we can consider $\bm{v}=(u^{1}, (1/2)w, u^{1}_{2}, u^{2}, u^{2}_{1}, (1/2)w )\in J^{1}\Lambda^{1}(v)$.
From the definition of jets and the differential operators, we see $\div \bm{v}=  \bm{w}$. By analogous argument, we have in 3D: $\ker (\grad)=\mathbb{R}$, and  $\curl$: $J^{1}\Lambda^{1}(v, \Omega)\rightarrow J^{0}\Lambda^{2}(v, \Omega)$ is onto, because for $\bm{w}=(w^{1}, w^{2}, w^{3})\in J^{0}\Lambda^{2}(v, \Omega)$, we can define $\bm{u}=(u^{1}, u_{1}^{1}, 0, 0, u^{2}, w^{3}, u_{2}^{2}, -w^{1}, u^{3}, -w^{2}, 0, u_{3}^{3})$, and $\curl\bm{u}=\bm{w}$.  

 To verify the exactness at $J^{1}\Lambda^{1}(v, \Omega)$, it suffices to check the dimensions. In  $n$ spatial dimensions where $n\geq 1$, we have the dimension count
  $$
0\rightarrow  1\rightarrow \frac{1}{2}\left (  n^{2}+3n+2 \right )\rightarrow n(n+1)\rightarrow  \frac{1}{2}(n-1)n  \rightarrow 0,
 $$
 which verifies the exactness of the vertex sequences \eqref{vertex-jet-r2}. 
 
The edge bubbles of the family $r=2$  reads ($p\geq 2$):
  \begin{equation}\label{edge-r2}
\begin{CD}
0@>>>\0{J}^{1}_{p}\Lambda^{0}(e, \Omega) @>d>>\0{J}^{0}_{p-1}\Lambda^{1}(e, \Omega)  /\mathbb{R}@ > >>  0.
\end{CD}
\end{equation}
Here $\0{J}^{1}_{p}\Lambda^{0}(e, \Omega)$ is the 1-jet on the edge $e$ with vanishing vertex DoFs, i.e. derivatives up to the second order. 
The space $\0{J}^{0}_{p-1}\Lambda^{1}(e, \Omega)/\mathbb{R} $ is the 0-jet on $e$ with vanishing vertex DoFs, i.e. derivatives up to the first order, and vanishing integration.

For $0$- forms, there are $p-5$ DoFs for the function value and $(n-1)(p-4)$ DoFs for the normal  derivatives on each edge. For $1$- forms, we have $n$ components and $p-4$ DoFs for each component on each edge.  Therefore we have the dimension count
 $$
0\rightarrow (p-5)+(n-1)\cdot (p-4)\rightarrow n(p-4)-1\rightarrow 0,
 $$
which implies the exactness.

The face bubbles can be formally written as 
  \begin{equation*}
\begin{CD}
0@>>>{\0{\mathcal{P}}}_{2, p} \Lambda^{0}(f) @>d>>{\0{\mathcal{P}}}_{2, p-1}\Lambda^{1}(f) @>d >>{\0{\mathcal{P}}}_{2, p-2}\Lambda^{2}(f)   /\mathbb{R}@ > >>0.
\end{CD}
\end{equation*}
The vanishing boundary conditions can be  defined as follows. The element ${\0{\mathcal{P}}}_p \Lambda^{0}(f) $ has vanishing  values and derivatives up to the second order at the vertices and vanishing values and first order derivatives   on the edges.  The element ${\0{\mathcal{P}}}_{p-1}\Lambda^{1}(f)  $  has vanishing  values and first order derivatives at the vertices and vanishing  values on the edges. The element ${\0{\mathcal{P}}}_{p-2}\Lambda^{2}(f)  /\mathbb{R}$ takes zero  values at the vertices and has vanishing integral on $f$.

 The interior bubbles coincide with the standard finite element de Rham complexes which are exact:
  \begin{equation*}
\begin{CD}
0@>>>{\0{\mathcal{P}}}_p \Lambda^{0}(t) @>d>>{\0{\mathcal{P}}}_{p-1}\Lambda^{1}(t) @>d >>{\0{\mathcal{P}}}_{p-2}\Lambda^{2}(t)  @ >d >>   \mathcal{P}_{p-3}\Lambda^{3}(t)  /\mathbb{R}@ > >>0.
\end{CD}
\end{equation*}

\section{Bernstein-Gelfand-Gelfand (BGG) constructions}\label{sec:BGG}

Arnold, Falk and Winther \cite{Arnold2006a} introduced the Bernstein-Gelfand-Gelfand (BGG) constructions into numerical analysis to derive finite elements of symmetric tensors from the well-known de Rham elements. Later, Hu and Zhang \cite{hu2014family} designed a new family in a straightforward  way.  In this section, we re-construct the 2D Hu-Zhang stress element using BGG and the families discussed in this paper. We hope that the discussions below could build some connections between these two approaches, and further shed some light on the construction of other tensor valued elements and complexes.  There is a unified   construction in any spatial dimension (c.f. \cite{Hu2015}),  but based on the de Rham families in this paper we only re-construct the 2D case. 

Most of the discussions in this section are routine following \cite{Arnold2006a}. The key observation is that we use the Hermite element for both the 2D ${\0\mathcal{P}}_{1, p}\Lambda^{0}$ space  and each component of the 2D $r=2$ $H(\div)$ space, therefore  $S_{0}$ below is an isomorphism between the finite element spaces.

We construct the 2D Hu-Zhang element step by step based on the following diagram. 
\begin{equation}\label{eqn:diagram-bgg}
 \begin{diagram}
\mathcal{P}_{2, p+3}\Lambda^{0}(\mathcal{T}_{h}^{2}, \mathbb{K}) & \rTo^{d_{0}} & \mathcal{P}_{2, p+2}\Lambda^{1}(\mathcal{T}_{h}^{2}, \mathbb{K}) & \rTo^{d_{1}} & \mathcal{P}_{2, p+1}\Lambda^{2}(\mathcal{T}_{h}^{2}, \mathbb{K})& \rTo & 0\\
  & \ruTo^{S_{0}}   &  &  \ruTo^{S_{1}} &&\\
\mathcal{P}_{1, p+2}\Lambda^{0}(\mathcal{T}_{h}^{2}, \mathbb{V})& \rTo^{d_{0}} & \mathcal{P}_{1, p+1}\Lambda^{1}(\mathcal{T}_{h}^{2}, \mathbb{V})  & \rTo^{d_{1}} &  \mathcal{P}_{1, p}\Lambda^{2}( \mathcal{T}_{h}^{2}, \mathbb{V})  &\rTo & 0
 \end{diagram} 
 \end{equation}
 Here $\mathcal{P}_{r, p}\Lambda^{k}(\mathcal{T}_{h}^{2}, \mathbb{V})$ is the space of differential forms taking values in the 2D vector space $\mathbb{V}=\mathbb{R}^{2}$. Similarly, $\mathcal{P}_{r, p}\Lambda^{k}(\mathcal{T}_{h}^{2}, \mathbb{K})$ is the space of skew-symmetric matrix valued differential forms. In 2D, skew-symmetric matrices can be identified with scalars. We denote the skew-symmetric matrix basis
 $$
 \bm{\chi}:=\left (
 \begin{array}{cc}
 0 & -1\\
 1 & 0
 \end{array}
 \right ).
 $$

The vector valued 0-, 1- and 2-forms in the second row of \eqref{eqn:diagram-bgg} can be represented as
   $$
 \left (
 \begin{array}{c}
 u_{1}\\
 u_{2}
 \end{array}
 \right ), \quad
  \left (
 \begin{array}{c}
 w_{11}\\
 w_{21}
 \end{array}
 \right )dx^{1}+
 \left (
 \begin{array}{c}
 w_{12}\\
 w_{22}
 \end{array}
 \right )dx^{2},\quad 
  \left (
 \begin{array}{c}
 v_{1}\\
 v_{2}
 \end{array}
 \right )dx^{1}\wedge dx^{2},
 $$
 respectively. These vector valued differential forms can be identified with the matrix/vector form:
 $$
\left (
 \begin{array}{c}
 u_{1}\\
 u_{2}
 \end{array}
 \right ),   \quad
 \left (
   \begin{array}{cc}
 -w_{12} & w_{11}\\
 -w_{22} & w_{21}
 \end{array}\right ),\quad 
  \left (
 \begin{array}{c}
 v_{1}\\
 v_{2}
 \end{array}
 \right ).
 $$ 
  The vector valued 1-form $ \mathcal{P}_{1, p+1}\Lambda^{1}(\mathcal{T}_{h}^{2}, \mathbb{V})$ is identified with a matrix and each row is a Stenberg vector element.
  
 Similarly, the skew-symmetric matrix valued differential forms 
 $$
 u\bm{\chi}, \quad w_{1}\bm{\chi}dx^{1}+w_{2}\bm{\chi}dx^{2}, \quad v\bm{\chi} dx^{1}\wedge dx^{2}
 $$
 can be identified with 
  $$
 u\bm{\chi}, \quad \left ( 
 \begin{array}{cc}
 -w_{2}, w_{1}
 \end{array}
 \right )\bm{\chi},
  \quad -v\bm{\chi}.
 $$
 With these identifications, the exterior derivatives $d_{0}$ and $d_{1}$ correspond to $\curl$ and $\div$ for each row.
 
 Following \cite{Arnold2006a,Arnold.D;Falk.R;Winther.R.2006a}, we define $S_{0}: \mathcal{P}_{1, p+2}\Lambda^{0}(\mathcal{T}_{h}^{2}, \mathbb{V})\mapsto \mathcal{P}_{1, p+2}\Lambda^{1}(\mathcal{T}_{h}^{2}, \mathbb{K})$ and $S_{1}: \mathcal{P}_{1, p+1}\Lambda^{1}(\mathcal{T}_{h}^{2}, \mathbb{V})\mapsto \mathcal{P}_{1, p+1}\Lambda^{2}(\mathcal{T}_{h}^{2}, \mathbb{K})$ by
 $$
 S_{0}:  \left (
 \begin{array}{c}
 u_{1}\\
 u_{2}
 \end{array}
 \right )\sim
  \left (
 \begin{array}{c}
 u_{1}\\
 u_{2}
 \end{array}
 \right )\mapsto -u_{2}\bm{\chi}dx^{1}+u_{1}\bm{\chi}dx^{2}\sim - \left (
 \begin{array}{cc}
 u_{1},
 u_{2}
 \end{array}
 \right )\bm{\chi},
 $$
 $$
 S_{1}:  
  \left (
 \begin{array}{c}
 w_{11}\\
 w_{21}
 \end{array}
 \right )dx^{1}+\left (
 \begin{array}{c}
 w_{12}\\
 w_{22}
 \end{array}
 \right )dx^{2}\sim
  \left (
   \begin{array}{cc}
 -w_{12} & w_{11}\\
 -w_{22} & w_{21}
 \end{array}\right )
 \mapsto -(w_{11}+w_{22})\bm{\chi}dx^{1}\wedge dx^{2}\sim (w_{11}+w_{22})\bm{\chi},
 $$
 where the tilde denotes the correspondence between the differential form and the vector form.
 The operator $S_{0}$ is an isomorphism while the operator $S_{1}$ is surjective. With the vector proxy, $S_{0}$ is the identity operator (up to a sign and the skew-symmetric matrix basis $\bm{\chi}$) and $S_{1}$ is the skew-symmetrization. The identity $d_{1}S_{0}+S_{1}d_{0}=0$ holds.

We are ready to introduce the BGG construction now.
\paragraph{Step 1. Complex of product spaces.} Define $\Xi_{p}^{k}=\mathcal{P}_{2, p+1}\Lambda^{k}(\mathcal{T}_{h}^{2}, \mathbb{K})\times  \mathcal{P}_{1, p}\Lambda^{k}( \mathcal{T}_{h}^{2}, \mathbb{V})$. The first step is to establish a new complex of $\Xi_{p}^{k}$ with certain operators.

To do this, we define $\mathcal{A}_{k}: \Xi_{p+1}^{k}\mapsto \Xi_{p}^{k+1}$ by 
$$
\mathcal{A}_{k}:=\left (
\begin{array}{c c}
d_{k} & -S_{k}\\
0 & d_{k}
\end{array}
\right ).
$$
For $(\omega, \mu)\in \Xi_{p}^{k}$, $\mathcal{A}_{k}(\omega, \mu)=(d_{k}\omega-S_{k}\mu, d_{k}\mu)$.
By straightforward calculations, we have $\mathcal{A}_{1}\mathcal{A}_{0}=0$, and the complex
  \begin{equation}\label{BGG-xi}
\begin{CD}
\cdots@>>>\Xi_{p+2}^{0}@>\mathcal{A}_{0}>>\Xi_{p+1}^{1}@>\mathcal{A} _{1}>>\Xi_{p}^{2} @ >>>   0
\end{CD}
\end{equation}
is exact on contractible domains.

\paragraph{Step 2. Projection to a subcomplex.} The second step is to project \eqref{BGG-xi} to a subcomplex. Following \cite{Arnold2006a}, we define 
$$
\Gamma^{0}_{p}=\{(\omega, \mu)\in \Xi_{p}^{0}: d_{0}\omega=S_{0}\mu \}, \quad \Gamma^{1}_{p}=\{(\omega, \mu)\in \Xi_{p}^{1}: \omega=0 \},
$$
and 
$$
\pi^{0}(\omega, \mu)=(\omega, S_{0}^{-1}d_{0}\omega), \quad \pi^{1}(\omega, \mu)=(0, \mu+d_{0}S_{0}^{-1}\omega).
$$
Then it is straightforward to check that the following diagram is commuting, and therefore the bottom complex is exact on contractible domains:
\begin{equation}\label{complex-Gamma}
 \begin{diagram}
 \cdots& \rTo  &  \Xi_{p+2}^{0} & \rTo^{\mathcal{A}_{0}} & \Xi_{p+1}^{1}& \rTo^{\mathcal{A}_{1}} &  \Xi_{p}^{2} & \rTo & 0\\
&&\dTo^{\pi^{0}}  &   & \dTo^{\pi^{1}}    & & \dTo^{\mathrm{id}} &\\
 \cdots& \rTo  & \Gamma^{0}_{p+2}& \rTo^{\mathcal{A}_{0}} & \Gamma^{1}_{p+1}   & \rTo^{\mathcal{A}_{1}} & \Xi_{p}^{2}  &\rTo & 0.
 \end{diagram}
 \end{equation}

\paragraph{Step 3. Identification.}

Identifying $(\omega, S_{0}^{-1}d_{0}\omega)\in \Gamma_{p+2}^{0}$ with $\omega$  and $(0, \mu)\in \Gamma_{p+1}^{1}$ with $\mu$, the bottom sequence in \eqref{complex-Gamma} can be interpreted as
  \begin{equation}
\begin{CD}
\cdots@>>>\mathcal{P}_{2, p+3}\Lambda^{0}\left (\mathcal{T}_{h}^{2}, \mathbb{K}\right )@>d_{0}S_{0}^{-1}d_{0}>>\mathcal{P}_{1, p+1}\Lambda^{1}\left ( \mathcal{T}_{h}^{2}, \mathbb{V}\right )@>(-S_{1}, d_{1}) >>\Xi_{p}^{2} @ >>>   0,
\end{CD}
\end{equation}
where $d_{0}S_{0}^{-1}d_{0}$ corresponds to the Airy operator:
$$
dS_{0}^{-1}d(u\bm{\chi})\sim - \left (
\begin{array}{c c}
 \partial_{2}^{2}u& -\partial_{2}\partial_{1}u \\
-\partial_{1}\partial_{2}u & \partial_{1}^{2}u
\end{array}
\right ).
$$
Now we obtain the elasticity complex with weak symmetry.

\paragraph{Step 4. Imposing the symmetry constraint.}

The matrix form $M$ of $u\in \mathcal{P}_{1, p}\Lambda^{1} \left (\mathcal{T}_{h}^{2}, \mathbb{V}\right )$ is symmetric if and only if $S_{1}u=0$.   We  reduce the DoFs of the skew-symmetric part of $M$ to get the 2D Hu-Zhang elements. 

We first re-write the DoFs for $M$ as:
\begin{enumerate}
\item
four DoFs of function values at each vertex $v\in \mathcal{V}$: $M(v)$,
\item
edge DoFs: $\int_{e}(M\cdot \bm{\nu}_{e})\cdot\bm{q}, \quad \bm{q}\in \left (\mathcal{P}_{p-2}(e)\right )^{2}$,
\item
interior DoFs for the skew-symmetric part, which are the same as the interior DoFs of $\mathcal{P}_{2, p}\Lambda^{2}\left ( \mathcal{T}_{h}^{2}\right )$ ($1/2p^{2}+3/2p-2$ DoFs):
$$
\int_{t}\mathrm{skw}(M): q\bm{\chi}, \quad \forall q\in \mathcal{P}_p (t), ~q=0 \mbox{ on $\mathcal{V}$ },
$$
\item
interior DoFs, which are the same as those of the 2D Hu-Zhang $H_{h}(\div, \mathbb{S})$ space ($3/2p^{2}-3/2p$ DoFs):
$$
\int_{t}M:\theta, \quad \theta \in \{\tau\in \mathcal{P}_p (t, \mathbb{S}): \tau\cdot\bm{\nu}_{\partial t}=0\},
$$
where $\mathcal{P}_p (t, \mathbb{S})$ represents the symmetric matrix valued polynomial space on $t$ with degree $p$. 
\end{enumerate}

The number of interior DoFs is 
$$
\left (1/2p^{2}+3/2p-2\right )+\left ( 3/2p^{2}-3/2p\right )=2\cdot\dim \left ( \mathcal{P}_{p-1}^{-}\Lambda^{1}(t)\right ),
$$
 which coincides with the dimension of the interior bubbles of $\mathcal{P}_{1, p}\Lambda^{1}\left ( \mathcal{T}_{h}^{2}, \mathbb{V}\right )$. Furthermore, if all the DoFs vanish, we first verify that $\mathrm{skw}(M)=0$ by the DoFs of the skew-symmetric part, then $M$ vanishes by the same argument as the Hu-Zhang element \cite{hu2016finite}. This implies the unisolvence, and these DoFs define the same element as $\mathcal{P}_{1, p}\Lambda^{1}\left (\mathcal{T}_{h}^{2}, \mathbb{V}\right )$.

Finally, we can reduce the DoFs of the skew-symmetric part to get the Hu-Zhang element:
\begin{equation}\label{complex-HuZhang}
 \begin{diagram}
 \cdots& \rTo  &  \mathcal{P}_{2, p+3}\Lambda^{0}\left (\mathcal{T}_{h}^{2}, \mathbb{K}\right ) & \rTo^{d_{0}S_{0}^{-1}d_{0}} & \mathcal{P}_{1, p+1}\Lambda^{1}\left (\mathcal{T}_{h}^{2}, \mathbb{K}\right )& \rTo^{(-S_{1}, d_{1})} &  \Xi_{p}^{2} & \rTo & 0\\
&&\dTo^{\mathrm{id}}  &   & \dTo^{\mathrm{id}-i_{h}\mathrm{skw}}    & & \dTo^{\Pi_{h}} &\\
 \cdots& \rTo  &  \mathcal{P}_{2, p+3}\Lambda^{0}\left (\mathcal{T}_{h}^{2}, \mathbb{V}\right )& \rTo^{d_{0}S_{0}^{-1}d_{0}} & \bm{\Sigma}_{h}   & \rTo^{d_{1}} &  \mathcal{P}_{1, p}\Lambda^{2}\left ( \mathcal{T}_{h}^{2}, \mathbb{V}\right ) &\rTo & 0.
 \end{diagram}
 \end{equation}
The operator  $i_{h}$ is a discrete inclusion, mapping a skew-symmetric matrix $K$ to $  \mathcal{P}_{1, p+1}\Lambda^{1}\left (\mathcal{T}_{h}^{2}, \mathbb{V}\right ) $, 
  defined by setting $\mathrm{skw}(i_{h}K)=\mathrm{skw}(K)$ at the vertices and setting the interior DoFs for the skew-symmetric part:
  $$
\int_{t}\mathrm{skw}\left  (i_{h}K\right ): q\bm{\chi}=\int_{t}\mathrm{skw}(K):  q\bm{\chi}, \quad \forall q\in \mathcal{P}_{p+1} (t), q=0 \mbox{ on $\mathcal{V}$ },
$$
and setting the DoFs in Item 2 and Item 4 above to zero.
Defining  $\Pi_{h}(\omega, \mu):=\mu+d_{1}i_{h}\omega$, we can check that \eqref{complex-HuZhang} commutes and particularly, the bottom sequence is exact on contractible domains, which gives the Hu-Zhang stress and displacement elements in 2D.



\section{Concluding remarks}\label{sec:concluding}

We discussed finite element de Rham complexes with higher continuity on  sub-simplexes, and as a result, some of these elements can be represented by Lagrange or Hermite type bases. Motivated by the idea of finite element systems, we gave several different combinations of local exact sequences.  Actually, we found the positions of existing elements with different initial purposes, and discovered new ones (Table \ref{tab:element}): the $H(\curl)$ elements with  $n=3, r=1, 2$ are new.  Table \ref{tab:local-exactness} shows the local  continuity  in 3D: $r=0$ and $r=1$ have the same face bubbles,  $r=0$, $r=1$ and $r=2$ all have the same interior bubbles.

  \begin{table}[H]
\begin{center}
\begin{tabular}{|c|c|c|c|}
\hline
 & $r=0$ & $r=1$ & $r=2$ \\\hline
 $n=1$ &  Lagrange - DG  &   Hermite - Lagrange & Argyris - Hermite   \\\hline
  $n=2$ & Lagrange - BDM - DG & new, 2D Stenberg $H(\div)$ \cite{Stenberg2010} &  Falk-Neilan Stokes \cite{falk2013stokes} \\\hline
   $n=3$ & Lagrange - N\'{e}d\'{e}lec - BDM - DG & new & new, 3D Stenberg $H(\div)$ \cite{Stenberg2010}\\\hline 
\end{tabular}
\end{center}
\caption{Families with $r=0,1, 2$.}
\label{tab:element}
\end{table}%

\begin{table}[H]
\begin{center}
\begin{tabular}{|c|c|c|c|}
\hline
 & r=0 & r=1 & r=2 \\\hline
vertex &  $C^{0}$  &   $C^{1}$ - $C^{0}$ & $C^{2}$ - $C^{1}$- $C^{0}$   \\\hline
edge & $C^{0}$ - $C^{\tau}$  & $C^{0}$ - $C^{\tau}$  &  $C^{1}$ - $C^{0}$  \\\hline
face & $C^{0}$ - $C^{\tau}$ - $C^{n}$  & $C^{0}$ - $C^{\tau}$ - $C^{n}$ & $C^{0}$ - $C^{\tau}$ - $C^{n}$ \\\hline 
\end{tabular}
\end{center}
\caption{Local  continuity of the 3D families: $C^{\tau}$ and $C^{n}$ represent tangential and normal continuity.}
\label{tab:local-exactness}
\end{table}%

The new elements will lead to smaller stiffness and mass matrices due to the enhanced regularity. More importantly, the new  geometric decomposition and topological structure lead to nodal type bases in some cases.

Only complete polynomials were considered. It is known that the $\mathcal{P}^{-}$ family with incomplete polynomials  can also be used to construct exact sequences, i.e. $d: \mathcal{P}_{p}\Lambda^{k}\rightarrow \mathcal{P}_{p-1}\Lambda^{k+1}$, $d: \mathcal{P}_{p}\Lambda^{k}\rightarrow \mathcal{P}_{p}^{-}\Lambda^{k+1}$, $d: \mathcal{P}_{p}^{-}\Lambda^{k}\rightarrow \mathcal{P}_{p-1}\Lambda^{k+1}$ and $d: \mathcal{P}_{p}^{-}\Lambda^{k}\rightarrow \mathcal{P}_{p}^{-}\Lambda^{k+1}$ all have the same kernel (c.f. \cite{Arnold.D;Falk.R;Winther.R.2006a} Lemma 3.8). Combining with the new complexes investigated in this paper, we have more options to construct differential complexes, for example,   
  \begin{equation}\footnotesize
\begin{CD}
0@>>>\mathbb{R}@>>>\mathcal{P}_{1, p}\Lambda^{0}\left (\mathcal{T}_{h}^{3}\right )@>\mathrm{grad}>>\mathcal{P}_{1, p-1}\Lambda^{1}\left (\mathcal{T}_{h}^{3}\right ) @>{\curl} >>\mathcal{P}^{-}_{p-2}\Lambda^{2}\left (\mathcal{T}_{h}^{3}\right )@>{\div} >>\mathcal{P}_{p-3}\Lambda^{3}\left (\mathcal{T}_{h}^{3}\right )   @>{} >>  0,
\end{CD}
\end{equation}  
is also exact on contractible domains.

The dimension count for the local sequences holds for any spatial dimension $n$. Therefore the results in this paper are promising to be generalised to higher spatial dimensions and on other element geometry (e.g. tensor product elements). From the perspective of local sequences, the results presented above could be considered as a nontrivial generalisation of the systematic constructions in Cockburn and Fu \cite{cockburn2017systematic}.

In 3D, the  discrete BGG construction with the new families  in this paper will not yield conforming finite elements for elasticity with strongly imposed symmetry.  Actually the polynomial shape function spaces and the  locality of the degrees of freedom impose a strong constraint in the construction of conforming finite elements. Therefore in the BGG construction, it seems desirable to relax these constraints. Composite elements have been developed in Christiansen and Hu \cite{christiansen2016generalized} which relaxed the constraint of local polynomials to allow piecewise polynomials. Another  approach is to consider nonconforming elements.  A 2D  sequence with the Morley - Crouzeix-Raviart - DG elements can be found in Brenner \cite{brenner2015forty} and similar results also hold for  the Morley-Wang-Xu (MWX) family \cite{ming2006morley}  in higher spatial dimensions.

The new constructions in this paper yield elements with nodal type bases and fewer DoFs. As a result, the structure of the resulting algebraic systems differs from that of the standard vector elements and this provides an opportunity for the construction of well conditioned bases for high order $H(\curl)$ and $H(\div)$ elements. 

On the other hand, although the number of global DoFs is reduced, the nodal bases lead to denser stiffness and mass matrices. Therefore preconditioning and solvers for the new elements remain an issue to be explored. For Hu-Zhang elements for linear elasticity with nodal bases, auxiliary space preconditioners have been designed and analyzed  in  \cite{chen2016fast}. 
Moreover, classical hybridization techniques for canonical  $H(\div)$ face elements (Raviart-Thomas or BDM elements) cannot be directly applied to the nodal  elements (Stenberg or Hu-Zhang type) discussed in this paper.

Furthermore, we hope that the study in this paper could shed some light on the nodal element discretisation for computational electromagnetism.

\section*{acknowledgements}
The authors are grateful to Dr. Rui Ma, Mr. Espen Sande,  Prof. Ragnar Winther and Prof. Jinchao Xu for helpful communications and to the anonymous referees for
valuable suggestions.

\bibliographystyle{spmpsci}      
\bibliography{fes}   

\end{document}